\documentclass{amsart}
\usepackage{pgf,pgfarrows,pgfnodes,pgfautomata,pgfheaps,pgfshade,hyperref, amssymb,enumerate,amsmath}
\usepackage[all]{xy}
\usepackage[capitalize]{cleveref}
\usepackage{mathtools}
\usepackage[shortlabels]{enumitem}
\usepackage{stmaryrd}
\usepackage{tcolorbox}

\definecolor{verde}{cmyk}{0.7,0,0.8,0.1}

\newtheorem{theorem}{Theorem}[section]

\newtheorem*{theorem*}{theorem}
\newtheorem{proposition}[theorem]{Proposition}
\newtheorem{lemma}[theorem]{Lemma}
\newtheorem{Lema}[theorem]{Lemma}
\newtheorem{corollary}[theorem]{Corollary}

\newtheorem{remark}[theorem]{Remark}

\usepackage{euscript,epsfig}
\usepackage{tikz}
\usetikzlibrary{graphs}
\usetikzlibrary[graphs]
\usetikzlibrary{arrows}
\usepackage{tikz-cd}
\usetikzlibrary{decorations.markings}
\usepackage[colorinlistoftodos]{todonotes}

\newcommand{\N}{\mathbb{N}}

\newcommand{\sub}{\subseteq}

\fboxrule0.0001pt \fboxsep0pt

\newcommand{\aut}{\operatorname{Aut}}

\DeclareMathOperator{\sym}{Sym}




\def\Ker{{\rm Ker}}
\def\aut{{\rm Aut}}

\setlength{\parindent}{0pt}

\author[]{Mar\'{\i}a Isabel Cortez}
\address{Facultad de Matem\'aticas, Pontificia Universidad Cat\'olica de Chile. Edificio Rolando Chuaqui, Campus San Joaquín. Avda. Vicuña Mackenna 4860, Macul, Chile.}
\email{maria.cortez@uc.cl}

\author[]{Vicente Urria}
\address{Facultad de Matem\'aticas, Pontificia Universidad Cat\'olica de Chile. Edificio Rolando Chuaqui, Campus San Joaquín. Avda. Vicuña Mackenna 4860, Macul, Chile.}
\email{vicente.urria@uc.cl}

\thanks{MSC2020 Classification: Primary 37B05, 22C05. Secondary 20E18, 20E26, 20E34.}

\thanks{Keywords: Stabilized automorphism group,  Topological full group,  Odometers, Group actions}

\begin{document}

\title{Stabilized automorphism groups and full groups of odometers.}

 \maketitle

\begin{abstract}
  In this article, we show that the stabilized automorphism group of   free exact  odometers arising from actions of  finitely generated   residually finite groups coincides with the topological full group of the odometer acting on itself by right multiplication. We then prove that two   free exact   odometers have isomorphic stabilized automorphism groups if and only if they have isomorphic clopen subgroups of the same index. As a consequence, continuous orbit equivalence implies isomorphic stabilized automorphism groups, while for   free  $\mathbb{Z}^d$-odometers, isomorphic stabilized automorphism groups imply orbit equivalence. In general, neither continuous orbit equivalence nor orbit equivalence is equivalent to having isomorphic stabilized automorphism groups.
  \end{abstract}
 
 \section{Introduction}
Stabilized automorphism groups were introduced by Hartman, Kra, and Schmieding in \cite{HKS22} in the setting of subshifts of finite type. In \cite{Sch22}, Schmieding provided a complete characterization of the isomorphism class of the stabilized automorphism groups associated with full shifts. Subsequently, Jones-Baro \cite{JB24} initiated the study of these algebraic invariants in the context of minimal Cantor systems, giving explicit des\-crip\-tions of the stabilized automorphism groups of odometers and Toeplitz subshifts, and showing that these groups serve as complete invariants for conjugacy in certain classes of odometers. These results were later extended by Espinoza and Jones-Baro in \cite{BJ24} to include minimal and certain transitive systems, where they demonstrated that having the same rational eigenvalues is a necessary condition for the stabilized automorphism groups to be isomorphic. More recently, in \cite{Sal23}, Salo introduced the notion of stabilized automorphism groups for actions of residually finite groups and described these groups for shifts of finite type.

In this article, we study the stabilized automorphism group of odometers endowed with a group structure, where the acting group is residually finite. We show that, in this setting, the stabilized automorphism group coincides with the topological full group of the odometer acting on itself by right multiplication. Furthermore, we prove that two odometers have isomorphic stabilized automorphism groups if and only if they possess isomorphic clopen subgroups of the same index. As a consequence, continuously orbit equivalent odometers necessarily have isomorphic stabilized automorphism groups; however, the converse does not hold. Indeed, the results in \cite{KS23} imply there exist odometers with isomorphic stabilized automorphism groups, one arising from the action of a finitely generated amenable group and the other from the action of a finitely generated non-amenable group.

In the case of $\mathbb{Z}^d$-odometers, an isomorphism of stabilized automorphism groups implies orbit equivalence. However, beyond $\mathbb{Z}$-odometers, neither continuous orbit equivalence nor orbit equivalence is equivalent to having isomorphic stabilized automorphism groups: we provide examples of non-continuously orbit equivalent $\mathbb{Z}^2$-odometers with isomorphic stabilized automorphism groups, as well as orbit equivalent $\mathbb{Z}^2$-odometers with non-isomorphic stabilized automorphism groups.

\medskip

    Before stating our main results, we briefly recall some basic notions.  
Let $G$ be a residually finite group. We say that $X$ is an \emph{odometer of $G$} if it is the inverse limit of the quotient sets associated with a decreasing sequence of finite index subgroups of $G$. If these subgroups are normal, then $X$ is called an \emph{exact odometer} of $G$. We denote by $\alpha_X$ the action of $G$ on $X$ given by left multiplication, and we refer to the dynamical system $(X,\alpha_X,G)$ as an odometer or an exact odometer accordingly.  

Note that if $X$ is an exact odometer such that the intersection of the defining normal finite index subgroups is trivial, then the action $\alpha_X$ is free. In this case, we say that both $X$ and $(X,\alpha_X,G)$ are \emph{free exact odometers}. Since an exact odometer $X$ is a group, it also acts by left and right multiplication on itself, and we denote these actions by $L_X$ and $R_X$, respectively.

More generally, if $(X,\alpha,G)$ is any topological dynamical system, we denote by $[[\alpha]]$ and $\aut^{\infty}(X,\alpha,G)$ the topological full group and the stabilized automorphism group of $(X,\alpha,G)$, respectively. Further details on these notions will be given in the next sections.

We say that the topological dynamical systems $(X,\alpha,G)$ and $(Y,\beta,H)$ are 
\emph{orbit equivalent} if there exists a homeomorphism $\varphi \colon X \to Y$ that 
induces a bijection between the orbits of $\alpha$ and $\beta$. They are said to be 
\emph{continuously orbit equivalent} if they are orbit equivalent via a homeomorphism 
$\varphi \colon X \to Y$  with the property that, for every $g \in G$ and every $x \in X$, there exist a clopen neighborhood $U$ of $x$ and an element $h \in H$ such that $\varphi(\alpha^g(y)) = \beta^h(\varphi(y))$, for all $y\in U$. See \cite{CM16, GMPS10, GPS19, M11} for further details on orbit equivalence.

\medskip

Our main results are the following:

\begin{theorem}\label{main_full_group}
   Let $G$ and $H$ be infinite finitely generated residually finite groups.    Let $X$ and $Y$ be   free exact odometers of $G$ and $H$, respectively.       Then the following statements are equivalent:
 \begin{enumerate}
 \item $[[L_X]]\cong [[L_Y]]$.
 \item $[[R_X]]\cong [[R_Y]]$.
 \item There exist clopen subgroups $U_1\subseteq X$ and $U_2\subseteq Y$ such that $U_1$ and $U_2$ are isomorphic as topological groups and $[X:U_1]=[Y:U_2]<\infty$.
 \end{enumerate}
 \end{theorem}

\begin{theorem}\label{main-characterization} 
   Let $G$ be a countable infinite residually finite group, and   let $X$ be a free exact odometer of $G$. 
    Then $\aut^{\infty}(X,\alpha_{X},G)$ coincides with $[[R_X]]$.
 \end{theorem}

As a consequence of Theorems  \ref{main_full_group} and \ref{main-characterization},  
we obtain the following characterization of isomorphisms between stabilized 
automorphism groups.

\begin{theorem}\label{main-1}
  Let $G$ and $H$ be infinite, finitely generated, residually finite groups,    and let $X$ and $Y$ be free exact odometers of $G$ and $H$, respectively.  Then the following statements are equivalent:
  \begin{enumerate}
    \item $\aut^{\infty}(X, \alpha_{X}, G) \cong \aut^{\infty}(Y, \alpha_{Y}, H)$.
    \item $[[L_X]]\cong [[L_Y]]$.
    \item $[[R_X]]\cong [[R_Y]]$,
    \item There exist clopen subgroups $U_1\subseteq X$ and $U_2\subseteq Y$ such that $U_1$ and $U_2$ are isomorphic as topological groups and $[X:U_1]=[Y:U_2]<\infty$.
  \end{enumerate} 
\end{theorem}
 
As a consequence of Theorem~\ref{main-1} together with \cite[Remark~3]{M11}, 
an isomorphism between $\aut^{\infty}(X,\alpha_X,G)$ and $\aut^{\infty}(Y,\alpha_Y,H)$ 
is equivalent to continuous orbit equivalence between the systems 
$(X,L_X,X)$ and $(Y,L_Y,Y)$, and also between $(X,R_X,X)$ and $(Y,R_Y,Y)$. 

The following result is a consequence of the preceding theorems together with 
the characterization of orbit equivalence for $\mathbb{Z}^d$-actions obtained by 
Giordano, Matui, Putnam, and Skau in  \cite{GMPS10}.

\begin{corollary}\label{main_OE}
Let $d_1,d_2 \in \mathbb{N}$,   and let $X_1$ and $X_2$ be free odometers  of $\mathbb{Z}^{d_1}$ and $\mathbb{Z}^{d_2}$, respectively. Then:
\begin{enumerate}
  \item If  $\aut^{\infty}(X_1, \alpha_{X_1},\mathbb{Z}^{d_1})\cong \aut^{\infty}(X_2, \alpha_{X_2},\mathbb{Z}^{d_2})$, then the dynamical systems  
  $(X_1, \alpha_{X_1},\mathbb{Z}^{d_1})$ and $(X_2, \alpha_{X_2},\mathbb{Z}^{d_2})$ are orbit equivalent.
  
  \item If  
  $(X_1, \alpha_{X_1},\mathbb{Z}^{d_1})$ and $(X_2, \alpha_{X_2},\mathbb{Z}^{d_2})$   are continuously 
  orbit equivalent, then   
  $\aut^{\infty}(X_1, \alpha_{X_1},\mathbb{Z}^{d_1}) \cong\aut^{\infty}(X_2, \alpha_{X_2},\mathbb{Z}^{d_2})$. 
\end{enumerate} 
\end{corollary}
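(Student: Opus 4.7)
The plan is to derive both implications directly from the preceding corollary combined with the dimension-group characterization of orbit equivalence for free minimal Cantor $\mathbb{Z}^d$-systems due to Giordano, Matui, Putnam, and Skau \cite{GMPS10}. For part (2), I would invoke the well-known principle that continuous orbit equivalence of two free minimal Cantor systems induces an isomorphism of their topological full groups by conjugation, since the cocycles witnessing a CoE are locally constant. Applied to $(X_1,\mathbb{Z}^{d_1})$ and $(X_2,\mathbb{Z}^{d_2})$ this gives $[[X_1]]\cong[[X_2]]$, and the previous corollary converts this into $\aut^\infty(X_1,\mathbb{Z}^{d_1})\cong\aut^\infty(X_2,\mathbb{Z}^{d_2})$.

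For part (1), I would start from $\aut^\infty(X_1,\mathbb{Z}^{d_1})\cong\aut^\infty(X_2,\mathbb{Z}^{d_2})$ and apply the previous corollary to obtain clopen subgroups $U_i\subseteq X_i$ with $U_1\cong U_2$ as topological groups and common finite index $n=[X_1:U_1]=[X_2:U_2]$. I would then attach to each $\mathbb{Z}^d$-odometer $X=\varprojlim \mathbb{Z}^d/\Gamma_k$ its Steinitz (supernatural) number $s(X)=\mathrm{lcm}_k\, |\mathbb{Z}^d/\Gamma_k|$, and observe that a short computation comparing clopen subgroups in $X_i$ and $U_i$ yields $s(X_i)=n\cdot s(U_i)$ as supernatural numbers; combined with $s(U_1)=s(U_2)$, this gives $s(X_1)=s(X_2)$. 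To conclude via GMPS it remains to identify the dimension-group invariant of a $\mathbb{Z}^d$-odometer with $\mathbb{Z}[1/s(X)]\subseteq\mathbb{Q}$ (with its natural order unit and positive cone), an invariant depending only on $s(X)$. Since this coincides with the dimension group of \emph{any} $\mathbb{Z}^{d'}$-odometer sharing the same Steinitz number, $(X_1,\mathbb{Z}^{d_1})$ and $(X_2,\mathbb{Z}^{d_2})$ have isomorphic dimension groups and are orbit equivalent by \cite{GMPS10}.

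The main technical point is the last identification: verifying that the dimension-group invariant of a $\mathbb{Z}^d$-odometer is canonically $\mathbb{Z}[1/s(X)]$, independently of $d$. The cleanest route is a direct calculation of $C(X,\mathbb{Z})$ modulo translation coboundaries: every continuous integer-valued function factors through a finite quotient $\mathbb{Z}^d/\Gamma_k$, on which the quotient of $\mathbb{Z}$-valued functions by translation coboundaries is canonically $\mathbb{Z}$ via the summation map, and the connecting maps between successive stages are multiplication by $|\Gamma_k/\Gamma_{k+1}|$. The direct limit is $\mathbb{Z}[1/s(X)]$ with its natural order. Once this identification is in place, the rest of part (1) is bookkeeping with supernatural numbers and an appeal to \cite{GMPS10}.
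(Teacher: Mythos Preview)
Your argument for part~(1) is essentially the paper's approach rephrased in supernatural-number language. The paper passes from the clopen-subgroup condition~(3) to the equality of the subgroups $\bigl\langle\{1/[G:G_n]:n\in\mathbb{N}\}\bigr\rangle\subseteq\mathbb{Q}$ via a Haar-measure computation; this subgroup is exactly your $\mathbb{Z}[1/s(X)]$, and the dimension-group identification you sketch is just the observation that for a uniquely ergodic system this subgroup is the image of the trace on the dimension group. The final appeal to \cite{GMPS10} is the same.

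For part~(2) there is a small but genuine gap. The ``well-known principle'' you invoke yields an isomorphism of the topological full groups of the $\mathbb{Z}^{d_i}$-\emph{actions} (the groups the paper writes as $[[\mathbb{Z}^{d_i}]]_L$), whereas the preceding corollary's condition~(2) concerns $[[X_i]]$, the topological full group of the \emph{profinite group} $X_i$ acting on itself by translation. These are different groups: the latter is the uniform closure of the former inside $\mathrm{Homeo}(X_i)$. The gap is easily closed---conjugation by a homeomorphism is continuous for the uniform topology, so the continuous-orbit-equivalence homeomorphism also conjugates the closures, giving $[[X_1]]\cong[[X_2]]$---but as written your sentence conflates the two objects. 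The paper takes a different route here: rather than passing through the full groups at all, it feeds continuous orbit equivalence through the structural characterization of \cite{CM16} (isomorphic finite-index subgroups of $G$ and $H$ carrying matched scales) to land directly on condition~(3), i.e.\ isomorphic clopen subgroups of equal index. Your route is arguably more conceptual once the closure step is made explicit; the paper's route avoids that step but imports a heavier external result.
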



  The paper is organized as follows. In Section~\ref{background} we introduce the notions of odometer, exact odometer, topological full group, automorphism group, and stabilized automorphism group of a dynamical system, with particular emphasis on the case of odometers. In Section~\ref{sec:full_group_odometer}, we introduce the actions $L_X$ and $R_X$ associated with an exact odometer $X$ and show that the closures in $\mathrm{Homeo}(X)$ of $[[\alpha_X]]_n$ and $[[\beta_X]]_n$ coincide with $[[L_X]]_n$ and $[[R_X]]_n$, respectively, where the subscript $n$ denotes the elements of the corresponding full groups that agree with the action of an element of the respective acting group on each cylinder set of level $n$. Using this fact together with the spatial nature of isomorphisms between topological full groups, we prove Theorem~\ref{main_full_group}. In Section~\ref{sec:stabilized} we show that the stabilized automorphism group $\aut^{\infty}(X,\alpha_X,G)$ of a free exact odometer $X$ of $G$ is equal to the union $\bigcup_{n\in\mathbb{N}}\aut(X,\alpha_X|_{\Gamma_n},\Gamma_n)$, where $(\Gamma_n)_{n\in\mathbb{N}}$ is a scale defining $X$ and $\alpha_X|_{\Gamma_n}$ denotes the restriction of $\alpha_X$ to $\Gamma_n$. We also prove that $\aut(X,\alpha_X|_{\Gamma_n},\Gamma_n)$ coincides with the closure of $[[\alpha_X]]_n$, which, together with the results of the previous section, allows us to establish Theorem~\ref{main-characterization} (see Proposition~\ref{like-full-group}). In the same section, we show that the amenability of the stabilized automorphism group of  $(X,\alpha_X,G)$ is equivalent to the amenability of $X$ (which is not equivalent to the amenability of the acting group $G$; see Remark~\ref{non-amenability}). Finally, in Section~\ref{sec:invariance} we study the connections between the different notions of orbit equivalence and the stabilized automorphism group, and we provide examples showing that Corollary~\ref{main_OE} is optimal, which is summarized in Proposition~\ref{Summary}.

\section{Definitions and background}\label{background}

In this article, a (topological) dynamical system refers to a   triple $(X,\alpha,G)$, where $X$ is a compact metric space and  $\alpha: G\times X\to X$ is a left action of the group $G$ on $X$ by homeomorphisms. We write $g \cdot x$ or $\alpha^g(x)$ for the image of $x \in X$ under the action of $g \in G$. The system $(X,\alpha,G)$, or equivalently the action $\alpha$, is said to be free if $\alpha^g(x)=x$ implies $g=1_G$ for every $x \in X$. Unless it is explicitly stated that the acting group is countable (as in the definition of an odometer), we do not assume it to be so. 

When $X$ is a Cantor set, we refer to   $(X,\alpha,G)$ as a \textit{Cantor dynamical system} (or simply a Cantor system).


The symmetric group on a set $F$ is denoted by $\sym(F)$. When $F$ has cardinality $n \in \mathbb{N}$, we may write either $\sym(F)$ or $\sym(n)$.

\subsection{Odometers} Most of the material in this section appears in \cite{CP08} and, for $\mathbb{Z}$-actions, in \cite{Dow05}.

Let $G$ be a countable  group and let $(\Gamma_n)_{n\in\mathbb{N}}$ be a decreasing sequence of finite index subgroups of $G$. For every $n\in\mathbb{N}$, consider $G/\Gamma_n$ the set of left cosets of  $\Gamma_n$ in $G$, and $\tau_n:G/\Gamma_{n+1}\to G/\Gamma_n$ the map given by $\tau_n(g\Gamma_{n+1})=g\Gamma_n$, for every $g\in G$. The {\it $G$-odometer} or odometer associated to $(\Gamma_n)_{n\in\mathbb{N}}$ is the space 
$$
G_{(\Gamma_n)}=\{(x_n)_{n\in\mathbb{N}}\in \prod_{n\in\mathbb{N}}G/\Gamma_n: \tau_n(x_{n+1})=x_n, \mbox{ for every } n\in\mathbb{N}\}.
$$
This space is totally disconnected if we endow every $G/\Gamma_n$ with the discrete topology, $\prod_{n\in\mathbb{N}}G/\Gamma_n$ with the product topology, and $G_{(\Gamma_n)}$ with the induced topology. The  space $G_{(\Gamma_n)}$ is a Cantor set if and only if $G_{(\Gamma_n)}$ is infinite, which is the case if and only if the sequence  $(\Gamma_n)_{n\in\mathbb{N}}$ does not stabilize. The topology of $G_{(\Gamma_n)}$ is induced by the metric
$$
d((x_n)_{n\in\mathbb{N}}, (y_n)_{n\in\mathbb{N}})=\frac{1}{2^{\min\{n\in \mathbb{N}: x_n\neq y_n\}}}, \forall (x_n)_{n\in\mathbb{N}}, (y_n)_{n\in\mathbb{N}}\in G_{(\Gamma_n)}.
$$
Observe that the ball of center $x=(x_n)_{n\in\mathbb{N}}\in G_{(\Gamma_n)}$ and radius $\frac{1}{2^k}$ is the set
$$
[x]_k=\{(y_n)_{n\in\mathbb{N}}: y_k=x_k\}.
$$
To refer to the set $[x]_n$, we will use the notation $[x]_n$,  $[x_n]_n$ and $[g]_n$ interchangeably throughout, where $g\in G$ is such that $g\Gamma_n=x_n$. In other words,  if $a\in G/\Gamma_n$ and $g\in G$ is such that $g\Gamma_n=a$, then the sets $[g]_n$ and $[a]_n$ are different notations to denote  the ball of radius $\frac{1}{2^n}$ centered at the elements $x=(x_k)_{k\in\mathbb{N}}\in G_{(\Gamma_n)}$ such that $x_n=a$.   We call the balls of radius $\frac{1}{2^{n}}$ the {\it cylinder sets of level $n$}.

The group $G$ acts on every set $G/\Gamma_n$ by left multiplication: 
$$
g\cdot (h\Gamma_n)=gh\Gamma_n, \mbox{ for every } g,h\in G.
$$
This action extends to  an action $\alpha_{(\Gamma_n)}: G\times G_{(\Gamma_n)}\to G_{(\Gamma_n)}$ coordinate wise:  
$$
\alpha^g_{(\Gamma_n)}(x)=g\cdot x=(g\cdot x_n)_{n\in\mathbb{N}}, \mbox{ for every } x=(x_n)_{n\in\mathbb{N}}\in G_{(\Gamma_n)} \mbox{ and } g\in G. 
$$
 We refer to the system  $(G_{(\Gamma_n)}, \alpha_{(\Gamma_n)}, G)$  as the   {\it left odometer} or odometer a\-sso\-cia\-ted with $(\Gamma_n)_{n\in\mathbb{N}}$.  It is known that $(G_{(\Gamma_n)}, \alpha_{(\Gamma_n)}, G)$ is minimal, equicontinuous and uniquely ergodic.   Indeed, the unique invariant probability measure $\mu$ is completely determined by 
$$
\mu([a]_n)=\frac{1}{[G:\Gamma_n]}, \mbox{ for every } a\in G/\Gamma_n \mbox{ and } n\in \mathbb{N}.
$$

\subsection{Exact odometers.}

If the groups $(\Gamma_n)_{n\in \mathbb{N}}$ are normal, then the odometer $G_{(\Gamma_n)}$ is a subgroup of  the product group $\prod_{n\in\mathbb{N}}G/\Gamma_n$.  In this setting, we call $G_{(\Gamma_n)}$, as well as the system $(G_{(\Gamma_n)}, \alpha_{(\Gamma_n)}, G)$, the \textit{exact odometer} associated with $(\Gamma_n)_{n\in\mathbb{N}}$. In this case, the unique invariant probability measure of $(G_{(\Gamma_n)}, \alpha_{(\Gamma_n)}, G)$ corresponds to the Haar measure of $G_{(\Gamma_n)}$. Furthermore,  the map $\tau:G\to G_{(\Gamma_n)}$ defined by $\tau(g)=(g\Gamma_n)_{n\in\mathbb{N}}$, for every $g\in G$, is a group homomorphism such that $\tau(G)$ is dense in $G_{(\Gamma_n)}$. If in addition $\bigcap_{n\in\mathbb{N}}\Gamma_n=\{1_G\}$, then  the action $\alpha_{(\Gamma_n)}$ is free, $\tau$ is injective,  and we can identify $G$ with $\tau(G)$.  Conversely, if $K$ is a metrizable compact totally disconnected group for which there exists an injective homomorphism $\tau : G \to K$ with dense image, then the dynamical system given by the action of $G$ on $K$ by left multiplication is conjugate to a free exact odometer (see \cite[Lemma~2.1]{CCG24}).

  If $G_{(\Gamma_n)}$ is an exact odometer, then $G$ also acts on 
$G_{(\Gamma_n)}$ by right multiplication:   $\beta_{(\Gamma_n)}: G\times G_{(\Gamma_n)}\to G_{(\Gamma_n)}$ is given by
\[
\beta^g_{(\Gamma_n)}(x)=g \cdot x \;=\; x \tau(g)^{-1}, 
\quad \text{for every } x \in G_{(\Gamma_n)} \text{ and } g \in G.
\]
 We refer to the system $(G_{(\Gamma_n)}, \beta_{(\Gamma_n)}, G)$  as the 
\emph{right odometer}  associated with 
$(\Gamma_n)_{n \in \mathbb{N}}$. The map 
\(\psi : G_{(\Gamma_n)} \to G_{(\Gamma_n)}\) defined by 
\(\psi(x) = x^{-1}\) for every $x \in G_{(\Gamma_n)}$ is a conjugacy between 
the left and right odometers.  

\medskip

From now on, we say that $(\Gamma_n)_{n \in \mathbb{N}}$ is a \emph{scale} 
if it is a decreasing sequence of normal finite index subgroups of $G$ with 
trivial intersection. Thus, if $(\Gamma_n)_{n\in\mathbb{N}}$ is a scale  then the dynamical system
$(G_{(\Gamma_n)}, \alpha_{(\Gamma_n)}, G)$ is a free eaxct odometer.  We will use the term \textit{free exact odometer} interchangeably to refer to $G_{(\Gamma_n)}$, $(G_{(\Gamma_n)}, \alpha_{(\Gamma_n)}, G)$, and $(G_{(\Gamma_n)}, \beta_{(\Gamma_n)}, G)$.

If, for simplicity, we set $X = G_{(\Gamma_n)}$, then we write $\alpha_X$ and $\beta_X$ in place of $\alpha_{(\Gamma_n)}$ and $\beta_{(\Gamma_n)}$, respectively.


\subsection{Full groups} Let   $(X, \alpha, G)$ be a Cantor dynamical system. 
The \emph{full group}   $[\alpha]$  of $(X, \alpha, G)$ is defined by
\[
[\alpha] = \{ f \in \mathrm{Homeo}(X) : 
   \text{for every } x \in X \text{ there exists } g \in G 
   \text{ such that } f(x) = g \cdot x \}.
\]
The \emph{topological full group}   $[[\alpha]]$ of $(X, \alpha, G)$ consists of all 
$f \in [\alpha]$ for which there exist a finite clopen partition 
$\{A_1, \dots, A_n\}$ of $X$ and elements $g_1, \dots, g_n \in G$ such that 
\[
f|_{A_i}(y) = g_i \cdot y, 
\quad \text{for every } y \in A_i \text{ and } 1 \leq i \leq n.
\]

\subsubsection{Full groups for odometers}

Let $(\Gamma_n)_{n \in \mathbb{N}}$ be a scale, and let $X$ be the a\-sso\-cia\-ted free exact odometer. 
 For $S\in \{\alpha_X, \beta_X\}$ and  $n\in \mathbb{N}$, define 
\[
[[S]]_{n} = \bigl\{ f \in [[T]] : 
   \forall a \in G/\Gamma_n, \; \exists g_a \in G 
   \text{ such that } f|_{[a]_n}(x) = S^{g_a}(x), 
   \; \forall x \in [a]_n \bigr\}.
\]

Note that 
\[
[[S]] = \bigcup_{n \in \mathbb{N}} [[S]]_{n}.
\]

If $G$ is abelian, then $[[\alpha_{X}]]_{n} = [[\beta_{X}]]_{n}$ for every $n \in \mathbb{N}$; 
in particular, the two topological full groups coincide. 
In general, the map $\psi(x) = x^{-1}$ induces an isomorphism 
\[
\alpha : [[\alpha_{X}]] \longrightarrow [[\beta_{X}]], 
\qquad \alpha(f) = \psi \circ f \circ \psi^{-1}, 
\quad \forall f \in [[\alpha_{X}]].
\]
Since $\alpha([[\alpha_{X}]]_{n}) = [[\beta_{X}]]_{n}$, \cite[Proposition~4.6]{CM16} implies that 
\[
[[\beta_{X}]]_{n} \cong [[\alpha_{X}]]_{n} \cong 
\Gamma_n^{G/\Gamma_n} \rtimes \sym(G/\Gamma_n),
\]
for every $n \in \mathbb{N}$,   where the semi-direct product is defined by the homomorphism $\varphi:\sym(G/\Gamma_n)\to \aut(\Gamma_n^{G/\Gamma_n})$ given by $\varphi(\sigma)((\gamma_a)_{a\in G/\Gamma_n})=(\gamma_{\sigma(a)})_{a\in G/\Gamma_n}$, for every $\sigma\in \sym(G/\Gamma_n)$ and $(\gamma_a)_{a\in G/\Gamma_n}\in \Gamma_n^{G/\Gamma_n}$.

\subsection{Automorphism group of an odometer.} 

Let    $(X, \alpha, G)$ be a topological dynamical system. The \emph{group of automorphisms} 
of $(X, \alpha, G)$ is the subgroup of $\mathrm{Homeo}(X)$ defined by
\[
\aut(X, \alpha, G) = \{ f \in \mathrm{Homeo}(X) : 
   f(g \cdot x) = g \cdot f(x), \; \forall x \in X, \; \forall g \in G \}.
\]

Let $(\Gamma_n)_{n \in \mathbb{N}}$ be a scale,   and let $X=G_{(\Gamma_n)}$ be the associated free exact odometer.
For each $a \in X$, define the maps  
$L^a_{X}, R^a_{X} : X \to X$ by
\[
R_{X}^a(x) = x a^{-1}, \quad  L^a_{X}(x) = ax , \quad  \forall x \in X.
\]


The following result was proved in \cite[Lemma~5.9]{DDMP16} for the case 
$G = \mathbb{Z}$ (see also \cite{Aus63}).  

\begin{lemma}\label{characterization_automorphism}
Let $(\Gamma_n)_{n \in \mathbb{N}}$ be a scale,   and let $X=G_{(\Gamma_n)}$ be the associated free exact odometer. Then
\[
f \in \aut(X, \alpha_{X}, G) 
\;\;\iff\;\; f = R^a_{X} \quad \text{for some } a \in X.
\]
Consequently, $\aut(X, \alpha_{X}, G)$ is isomorphic, as an abstract group, 
to $X$, and it coincides with the Ellis semigroup of the right 
odometer $(X, \beta_{X}, G)$.
\end{lemma}

\begin{proof}

It is immediate that $R_X^a \in \aut(X, \alpha_X, G)$ for every $a \in X$.  

Let $\phi \in \aut(X, \alpha_X, G)$, $\varepsilon > 0$, and $x \in X$. 
Set $a = (\phi(1_X))^{-1}$. 
Since $\phi$ is uniformly continuous, there exists $\delta > 0$ such that 
$d(x,y) < \delta$ implies $d(\phi(x), \phi(y)) < \varepsilon/2$. 
Moreover, by minimality, there exists $g \in G$ such that 
\[
d(g \cdot 1_X, x) < \min\{\delta, \tfrac{\varepsilon}{2}\}.
\]  
Hence,
\begin{align*}
d(\phi(x), xa^{-1}) 
  &\leq d(\phi(x), g \cdot a^{-1}) 
       + d(g \cdot a^{-1}, x a^{-1}) \\
  &= d(\phi(x), g \cdot \phi(1_X)) + d(g \cdot 1_X, x) \\
  &\leq d(\phi(x), \phi(g \cdot 1_X)) + \varepsilon/2 \\
  &\leq \varepsilon/2 + \varepsilon/2 = \varepsilon.
\end{align*}
Since $\varepsilon > 0$ is arbitrary, it follows that 
$\phi(x) = x a^{-1} = R_X^a(x)$ for every $x \in X$. 
Thus $\phi = R_X^a$. 
The isomorphism between $X$ and $\aut(X, \alpha_X, G)$ is given by the map 
$t \mapsto R_X^t$.  

Finally, note that for every $t \in X$ the map $R_X^t$ is the uniform limit of 
a sequence $(R_X^{g_n})_{n \in \mathbb{N}}$, where $g_n \in G$ satisfies 
$\tau(g_n) \in [t]_n$ for each $n \in \mathbb{N}$. 
Indeed, for $y \in X$ we have
\[
d(R_X^{g_n}(y), R_X^t(y)) 
  = d(y g_n^{-1}, y t^{-1}) 
  \leq \tfrac{1}{2^n},
\]
since $R_X^{g_n}(y)$ and $R_X^t(y)$ lie in the same cylinder set of level $n$.  

Conversely, as $(X, \beta_X, G)$ is equicontinuous, if $f$ belongs to the Ellis 
semigroup of $(X, \beta_X, G)$, then $f$ is the uniform limit of some sequence 
$(R_X^{g_n})_{n \in \mathbb{N}}$. 
Let $t = \lim_{n \to \infty} R_X^{g_n}(1_X)$. 
Then for $y \in X$,
\[
f(y) = \lim_{n \to \infty} R_X^{g_n}(y) 
      = \lim_{n \to \infty} y g_n^{-1} 
      = \lim_{n \to \infty} y R_X^{g_n}(1_X) 
      = yt 
      = R_X^{t^{-1}}(y).
\]
Therefore, $\aut(X, \alpha_X, G)$ coincides with the Ellis semigroup of $(X, \beta_X, G)$.
\end{proof}

\begin{remark}
  {\rm If $G$ is abelian, then the Ellis semigroup of the right and left  odometers coincide.   }  
\end{remark}
  
\subsection{Stabilized automorphism groups.}   The {\it stabilized automorphism group } of the dynamical system $(X, \alpha, G)$ is defined as
$$\aut^\infty(X, \alpha, G)=\bigcup_{H\in I(G)} \aut(X, \alpha|_H, H),$$
where $I(G)$ is the collection of finite index subgroups of $G$, and $\alpha|_H$ denotes the restriction of  $\alpha$ to $H\times X$, for $H\in I(G)$.  Since $I(G)$ is closed under finite intersection, the set   $\aut^{\infty}(X, \alpha, G)$ is a subgroup of $\mathrm{Homeo}(X)$. 

Observe that $\aut^\infty(X, \alpha, G)$ contains but is
not contained in $\aut(X, \alpha, G)$, because a homeomorphism that commutes with a subaction need not
commute with the whole action.

In the next section, we establish several results that will be essential for the des\-crip\-tion of the stabilized automorphism group of an odometer, which is presented in Section \ref{sec:stabilized}.

\section{Topological full group of the action of the odometer.}\label{sec:full_group_odometer}
 
 Let $G$ be an infinite   countable  residually finite group, and let 
$(\Gamma_n)_{n \in \mathbb{N}}$ be a scale of $G$.   The free exact odometer $X=G_{(\Gamma_n)}$ acts on itself by right and left multiplication: we denote $L_{X}: X\times X\to X$   the action given by 
\[
 L_{X}^{\xi}(x)=\xi \cdot x = \xi x, \quad \text{for all } x, \xi \in X,
\]
and by $R_{X}: X\times X\to  X$ the action defined by
\[
R_{X}^\xi(x)=\xi \cdot x = x \, \xi^{-1}, \quad \text{for all } x, \xi \in X.
\]

The map $\alpha : [[L_X]] \longrightarrow [[R_{X}]]$, given by
\[
 \alpha(f)(x) = (f(x^{-1}))^{-1}, \quad \forall x \in X, f \in [[L_{X}]],
\]
is an isomorphism.


For $S \in \{R_{X},L_{X}\}$ and $n \in \mathbb{N}$, define
\[
[[S]]_{n} = 
\Bigl\{ f \in [[S]] : 
   \forall a \in G/\Gamma_n, \; \exists \xi_a \in G_{X} \text{ such that } 
   f|_{[a]_n} = S^{\xi_a}|_{[a]_n} \Bigr\}.
\]

Observe that
\[
[[S]] = \bigcup_{n \in \mathbb{N}} [[S]]_{n}.
\]


\begin{lemma}\label{closure-1}
Let $G$ be a   countable infinite residually finite group, and let   $X$ be the free exact odometer associated with the scale 
$(\Gamma_n)_{n \in \mathbb{N}}$  of $G$. 
For   every $m \in \mathbb{N}$, we have
$$
\overline{[[\alpha_{X}]]}_m=[[L_{X}]]_m,
$$
and
\[
\overline{[[\beta_{X}]]}_{m} = [[R_{X}]]_{m},
\]
where $\overline{[[\alpha_{X}]]}_{m}$ and $\overline{[[\beta_{X}]]}_{m}$ denote the closure of $[[\alpha_{X}]]_{m}$ and $[[\beta_{X}]]_m$, respectively,  
with respect to the topology of uniform convergence in 
$\mathrm{Homeo}(X)$.
\end{lemma}
 
\begin{proof}
Let $f \in [[R_{X}]]_{m}$. 
Then, for each $a \in G/\Gamma_m$, there exists $\xi_a \in X$ such that
\[
f(x) = x \, \xi_a^{-1}, \quad \forall x \in [a]_m.
\]

For every $k \ge m$ and $a \in G/\Gamma_m$, choose $g_{a,k} \in G$ such that 
$g_{a,k} \in [\xi_a]_k$, and define
\[
f_k(x) = x g_{a,k}^{-1}, \quad \forall x \in [a]_m, \; a \in G/\Gamma_m.
\]
Since $g_{a,k} \in [\xi_a]_m$, we have $f_k([a]_m) = f([a]_m)$.  Since $\{f([a]_m): a\in G/\Gamma_m\}$ is a partition, $f_k$ is a homeomorphism and $f_k \in [[R_{X}]]_{m}$.  

Moreover, for $x \in [a]_m$, both $x g_{a,k}^{-1}$ and $x \xi_a^{-1}$ 
belong to the same cylinder set of level $k$. Therefore, 
\[
d_\infty(f_k, f) \le \frac{1}{2^k},
\]
where $d_\infty$ denotes the uniform distance. 
This shows that 
\[
[[R_{X}]]_{m} \subseteq \overline{[[\beta_{X}]]}_{m}.
\]

Conversely, let $(f_k)_{k \in \mathbb{N}}$ be a sequence in $[[\beta_{X}]]_{m}$ 
converging uniformly to some $f \in \mathrm{Homeo}(X)$. 
For each $k$ and $a\in G/\Gamma_m$, let $g_{a,k}\in G$ be such that
\[
f_k(x) = x g_{a,k}^{-1}, \quad \forall x \in [a]_m.
\]
Let $x \in [a]_m$. Then
\[
f(x) = \lim_{k \to \infty} f_k(x) = \lim_{k \to \infty} x g_{a,k}^{-1} 
= x \, \lim_{k \to \infty} g_{a,k}^{-1}.
\]
Hence, the limit $\xi_a^{-1} := \lim_{k \to \infty} g_{a,k}^{-1}$ exists, 
and $f(x) = x \, \xi_a^{-1}$ for $x \in [a]_m$.  
This proves the reverse inclusion.  

The argument for the left action is analogous.
\end{proof}

\begin{remark}\label{closure-1-remark}
{\rm Lemma \ref{closure-1} implies that the topological full groups   $[[\alpha_{X}]]$ and $[[\beta_X]]$ are dense in $[[L_{X}]]$ and $[[R_{X}]]$, respectively,   with respect to the uniform convergence topology in $\mathrm{Homeo}(X)$. }    
\end{remark}

\begin{lemma}\label{spatially-defined}
Let $G$ and $H$ be two   countable  infinite residually finite groups, and   let $X$ and $Y$ be free exact odometers of $G$ and $H$, respectively.

If there exists an isomorphism 
\[
\alpha : [[L_X]] \longrightarrow [[L_Y]],
\]
then there exists a homeomorphism 
\[
\phi : X \longrightarrow Y
\]
such that
\[
\alpha(f) = \phi \circ f \circ \phi^{-1}, \quad \forall f \in [[L_{X}]].
\]
In particular, $\alpha$ is continuous with respect to the uniform convergence topology.

The same holds if $L_X$ and $L_Y$ are replaced by $R_X$ and $R_Y$.
\end{lemma}
\begin{proof}
Each orbit of the (left or right) action of $X$ on itself has cardinality $|X|$, and the topological full groups $[[L_{X}]]$ and $[[R_X]]$ have many involutions (see \cite[Definition 1]{M11}).  
Therefore, by \cite[Theorem 2.6 and Remark 3]{M11}, there exists a ho\-meo\-morphism $\phi : X \longrightarrow Y$ with the desired properties.

Let $\varepsilon > 0$, and choose $\delta > 0$ such that 
for all $x, y \in X$ with $d(x,y) \le \delta$, 
we have $d(\phi(x), \phi(y)) \le \varepsilon$. 
If $f, g \in [[L_X]]$ satisfy $d_\infty(f,g) \le \delta$, 
then for every $x \in Y$, $d\bigl(f(\phi^{-1}(x)), g(\phi^{-1}(x))\bigr) \le \delta$. Hence,
\[
d\bigl(\phi \circ f \circ \phi^{-1}(x), \, \phi \circ g \circ \phi^{-1}(x)\bigr) 
\le \varepsilon, \quad \forall x \in Y,
\]
which shows that $d_\infty\bigl(\alpha(f), \alpha(g)\bigr) \le \varepsilon.$
This proves that $\alpha$ is continuous with respect to the uniform convergence topology.   The same holds if we replace $L_X$ by $R_X$.
\end{proof}

\begin{lemma}\label{auxiliar-bis}
Let $G$ and $H$ be two infinite finitely generated residually finite groups. Let $(\Gamma_n)_{n\in\mathbb{N}}$ and $(\Lambda_n)_{n\in\mathbb{N}}$ be scales of $G$ and $H$, respectively,   and let $X$ and $Y$ denote the corresponding free exact odometers.   Let  $\alpha: [[R_{X}]] \to [[R_{Y}]]$ be an isomorphism. Then  there exists $m\in \mathbb{N}$ such that 
$$
\alpha(R_{X}^{\xi})\in [[R_{Y}]]_{m}, \mbox{ for every } \xi\in X.
$$
  The same holds if $R_X$ and $R_Y$ are replaced by $L_X$ and $L_Y$.
\end{lemma}
\begin{proof}
  Lemma \ref{spatially-defined} implies there exists a homeomorphism $\phi:X\to Y$ such that $\alpha(f)=\phi\circ f\circ \phi^{-1}$, for every $f\in [[R_{X}]]$. Since $G$ is finitely generated, there exist $g_1,\cdots, g_k\in G$ such that $G=\langle g_1,\cdots, g_k\rangle$. Let $m\in \mathbb{N}$ be such that $\alpha(R_{X}^{g_i})\in [[R_{Y}]]_{m}$, for every $1\leq i\leq k$. Since $[[R_{Y}]]_{m}$ is a subgroup, we have $\alpha(R_{X}^g)\in [[R_{Y}]]_{m}$, for every $g\in G$.  Let $\xi \in X$ and $(g_i)_{i\in\mathbb{N}}$ be a sequence of elements of $G$ that converges to $\xi$.  This implies that $(R_{X}^{g_i})_{i\in\mathbb{N}}$ converges uniformly to $R_{X}^{\xi}$. From Lemma  \ref{spatially-defined} we get that $(\alpha(R_{X}^{g_i}))_{i\in\mathbb{N}}$ converges uniformly to $\alpha(R_{X}^{\xi})$. Lemma \ref{closure-1} implies that $[[R_{Y}]]_{m}$ is closed with respect to the uniform topology, from which we get $\alpha(R_{X}^{\xi})\in [[R_{Y}]]_{m}$.

The proof for  $L_X$  is similar.
\end{proof}

\begin{lemma}\label{measures}
  Let $G$ be an infinite residually finite group.   Let $X$ be a free exact odometer of $G$.    The Haar measure $\mu$ of $X$ is invariant with respect to the actions of     $[[L_{X}]]$ and $[[R_{X}]]$. 
\end{lemma}
\begin{proof}
The Haar measure $\mu$  of $X$ is invariant by left and right multiplication, therefore, it is also invariant with respect to the actions   of $[[L_{X}]]$ and $[[R_{X}]]$. 
    \end{proof}

\subsection{A necessary condition for topological full groups to be isomorphic.}

The following result demonstrates that if    $X$  and $Y$  are  free exact odometers such that  the topological full groups $[[R_{X}]]$ and $[[R_{Y}]]$ are isomorphic, then $X$ and $Y$ admit   clopen subgroups that are isomorphic and have the same finite index.  

\begin{proposition}\label{necessary}
  Let $G$ and $H$ be infinite finitely generated residually finite groups. Let $X_1$ and $X_2$ be  free exact odometers of $G$ and $H$, respectively.    If $[[R_{X_1}]]$ and $[[R_{X_2}]]$ are isomorphic, then  there exist clopen subgroups $U_1\subseteq X_1$, $U_2\subseteq X_2$, and $m\in\mathbb{N}$, such that $U_1$ and $U_2$ are isomorphic as topological groups, and  $[X_1:U_1]=[X_2:U_2]=m$.   
\end{proposition}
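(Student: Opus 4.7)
The plan is to exploit the spatial realization $\alpha(f)=\phi\circ f\circ\phi^{-1}$ provided by Lemma \ref{spatially-defined} in order to transfer part of the group structure of $G_{(G_n)}$ into $H_{(H_n)}$. I describe the argument for $S=R$; the case $S=L$ reduces to it via the canonical isomorphism $[[G_{(G_n)}]]_L\cong[[G_{(G_n)}]]_R$ induced by inversion. The first step is to apply Lemma \ref{auxiliar-bis} to fix $m_0\in\mathbb{N}$ such that $\alpha(f^R_\xi)\in[[H_{(H_n)}]]_{m_0,R}$ for every $\xi\in G_{(G_n)}$. Then, sending $\xi$ to the permutation $\pi(\xi)\in\sym(H/H_{m_0})$ that $\alpha(f^R_\xi)$ induces on the level-$m_0$ cylinders of $H_{(H_n)}$ gives a group homomorphism. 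Continuity of $\xi\mapsto f^R_\xi$ together with continuity of $\alpha$ (Lemma \ref{spatially-defined}), and the observation that two elements of $[[H_{(H_n)}]]_{m_0,R}$ at uniform distance less than $2^{-m_0}$ induce the same permutation, make $\pi$ locally constant. Set $U_1:=\ker\pi$, a clopen subgroup of $G_{(G_n)}$ of finite index.

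Next, for $\xi\in U_1$ the homeomorphism $\alpha(f^R_\xi)$ preserves each $[a]_{m_0}$ setwise and restricts to right multiplication by some $\eta_a^\xi\in[1_H]_{m_0}$. I will define
\[
\beta:U_1\longrightarrow [1_H]_{m_0},\qquad \beta(\xi):=\eta_{H_{m_0}}^\xi=\alpha(f^R_\xi)(1_H)^{-1}.
\]
Letting $x_0:=\phi^{-1}(1_H)$, a direct computation yields $\alpha(f^R_\xi)(1_H)=\phi(x_0\xi^{-1})$, so $\beta$ is the restriction to $U_1$ of the global homeomorphism $\psi_{x_0}:G_{(G_n)}\to H_{(H_n)}$ given by $\psi_{x_0}(\xi)=\phi(x_0\xi^{-1})^{-1}$. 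To see that $\beta$ is a group homomorphism, expand $\alpha(f^R_{\xi_1\xi_2})(1_H)=\alpha(f^R_{\xi_1})(\alpha(f^R_{\xi_2})(1_H))$: when $\xi_1,\xi_2\in U_1$ the point $\alpha(f^R_{\xi_2})(1_H)=\beta(\xi_2)^{-1}$ lies in $[1_H]_{m_0}$, and on that cylinder $\alpha(f^R_{\xi_1})$ acts by $y\mapsto y\beta(\xi_1)^{-1}$, whence $\beta(\xi_1\xi_2)=\beta(\xi_1)\beta(\xi_2)$. Injectivity of $\beta$ comes from injectivity of $\phi$. Therefore $U_2:=\beta(U_1)$ is a clopen subset of $H_{(H_n)}$ (image of a clopen set under the homeomorphism $\psi_{x_0}$) and a subgroup (image of a homomorphism), and $\beta:U_1\to U_2$ is a topological group isomorphism as a continuous bijective homomorphism of compact Hausdorff groups.

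Finally, for the index equality I will combine Lemma \ref{measures} with a unique ergodicity argument. The right translation action of the compact group $G_{(G_n)}$ on itself is transitive, so the Haar measure $\mu_G$ is its unique invariant probability; by Lemma \ref{measures}, $\mu_G$ is then also the unique $[[G_{(G_n)}]]_R$-invariant probability, and the analogous statement holds for $\mu_H$. Since $\phi$ conjugates $[[G_{(G_n)}]]_R$ onto $[[H_{(H_n)}]]_R$, the pushforward $\phi_*\mu_G$ is $[[H_{(H_n)}]]_R$-invariant, hence equal to $\mu_H$, so $\phi$ is measure-preserving. Inversion and left translation preserve Haar on a compact group, so $\psi_{x_0}$ is measure-preserving, and consequently
\[
\tfrac{1}{[H_{(H_n)}:U_2]}=\mu_H(U_2)=\mu_G(U_1)=\tfrac{1}{[G_{(G_n)}:U_1]},
\]
yielding $[G_{(G_n)}:U_1]=[H_{(H_n)}:U_2]=:m$.

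I expect the main obstacle to be verifying that $\beta$, defined from the single ``base'' cylinder $[1_H]_{m_0}$, is actually a group homomorphism whose image is a clopen \emph{subgroup} of $H_{(H_n)}$. This is where restricting to $U_1=\ker\pi$ is essential: on $U_1$ the permutation action on level-$m_0$ cylinders is trivial, so that $\alpha(f^R_{\xi_1})$ sends $[1_H]_{m_0}$ into itself and one can compose the local right-multiplication formulas cleanly; without this, the local pieces $\eta_a^\xi$ for different cosets $a$ would not combine in the required group-theoretic way.
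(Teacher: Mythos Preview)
Your argument is correct and follows essentially the same route as the paper: spatial realization via Lemma~\ref{spatially-defined}, then the map $\psi_{x_0}(\xi)=\phi(\phi^{-1}(1_H)\xi^{-1})^{-1}$ (the paper's $\psi$) restricted to a suitable clopen subgroup is shown to be a topological group isomorphism onto its image, and the index equality is read off from Haar measure. Two small differences: you take $U_1=\ker\pi$ (the kernel of the permutation action on level-$m_0$ cylinders), whereas the paper takes the larger stabilizer of the single coset $[1_H]_n$, obtaining $U_2=[1_H]_n$ exactly rather than a possibly smaller image; and for the index you invoke unique ergodicity to conclude $\phi_*\mu_G=\mu_H$ in one stroke, while the paper computes $\mu_1(\psi^{-1}([\eta]_n))$ directly for each coset and sums. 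Both variants work; yours is slightly slicker on the measure side, the paper's identifies $U_2$ more explicitly.
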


\begin{proof}
  Let $(\Gamma_n)_{n\in\mathbb{N}}$ and $(\Lambda_n)_{n\in\mathbb{N}}$ be scales of $G$ and $H$, respectively, such that $X_1=G_{(\Gamma_n)}$ and $X_2=H_{(\Lambda_n)}$. 
  
   Let $\alpha:[[R_{X_1}]]\to [[R_{X_2}]]$ be an isomorphism. Lemma \ref{spatially-defined} implies there exists a homeomorphism $\phi:X_1\to X_2$ such that $\alpha(f)=\phi\circ f \circ \phi^{-1}$, for every $f\in [[R_{X_1}]]$. From Lemma \ref{auxiliar-bis},   we have that there exists an $n\in\mathbb{N}$ such that $\alpha(R_{X_1}^{\xi})\in [[R_{X_2}]]_{n}$, for every $\xi\in X_1$.

  Let us define
 $$U_1=\{\xi\in X_1: \exists \eta\in [1_H]_n, \alpha(R_{X_1}^{\xi})|_{[1_H]_n}(x)=x\eta^{-1}, \forall x\in [1_H]_n\}.$$

Observe that $1_G\in U_1$. Furthermore, if $\xi_1$ and $\xi_2$ are in $U_1$, then there exist $\eta_1,\eta_2\in [1_H]_n$ such that $\alpha(R_{X_1}^{\xi_1})|_{[1_H]_n}(x)=x\eta_1^{-1}$ and $\alpha(R_{X_1}^{\xi_2})|_{[1_H]_n}(x)=x\eta_2^{-1}$, for every $x\in [1_H]_n$. Thus, for every $x\in [1_H]_n$ we have
\begin{equation}\label{eq-bis}
\alpha(R_{X_1}^{\xi_1\xi_2^{-1}})(x)=\alpha(R_{X_1}^{\xi_1}\circ (R_{X_1}^{\xi_2})^{-1})(x)=\alpha(R_{X_1}^{\xi_1})\circ\alpha(R_{X_1}^{\xi_2})^{-1}(x)=x\eta_2\eta_1^{-1},
\end{equation}
because $x\eta_2\in [1_H]_n$. This shows that $U_1$ is a subgroup of $X_1$.  

On the other hand, let $\xi\in U_1$ and $\eta\in [1_H]_n$ be such that $\alpha(R_{X_1}^{\xi})|_{[1_H]_n}(x)=x\eta^{-1}$, for every $x\in [1_H]_n$. We have 
\begin{equation}\label{eq-bis-2}
\eta^{-1}=\alpha(R_{X_1}^{\xi})(1_H)=\phi\circ R_{X_1}^{\xi}\circ\phi^{-1}(1_H)=\phi(\phi^{-1}(1_H)\xi^{-1}).
\end{equation}
Let define $\psi:X_1\to X_2$ by
$$\psi(\xi)=\left(\phi(\phi^{-1}(1_H)\xi^{-1})  \right)^{-1}, \mbox{ for every } \xi\in X_1.$$
Equation (\ref{eq-bis-2}) implies $\psi(U_1)\subseteq [1_H]_n$. 
Since $\phi$ is one-to-one, the map $\psi$ is injective. Furthermore, given $\eta\in X_2$, we have $\eta=\psi(\xi)$, with $\xi=\left( \phi^{-1}(\eta)\right)^{-1}\phi^{-1}(1_H)$. If $\eta\in [1_H]_n$ then $\psi^{-1}(\eta)\in U_1$, which shows that $\psi|_{U_1}:U_1\to [1_H]_n$ is a well defined bijection.  From equation (\ref{eq-bis}), we have $\psi(\xi_1\xi_2)=\psi(\xi_1)\psi(\xi_2)$, from which we conclude that $\psi|_{U_1}$ is an isomorphism between $U_1$ and $U_2=[1_H]_n$. Since $\psi$ is a homeomorphism, the group $U_1$ is clopen, because $[1_H]_n$ is clopen.

Finally, observe that for every $\eta\in X_2$,
$$
\psi^{-1}([\eta]_n)=\phi^{-1}([\eta^{-1}]_n)^{-1}\phi^{-1}(1_H).
$$
Thus, if $\mu_1$ is the Haar measure of $X_1$, we have
\begin{eqnarray*}
\mu_1(\psi^{-1}([\eta]_n))=\mu_1(\phi^{-1}([\eta^{-1}]_n)^{-1}) & =  & \mu_1(\phi^{-1}([\eta^{-1}]_n))\\ &= & \mu_1(\phi^{-1}([1_H]_n\eta^{-1}))\\ & = &\mu_1(f(\phi^{-1}([1_H]_n))),
\end{eqnarray*}
where $f\in [[R_{X_1}]]$ is such that $\alpha(f)(x)=x\eta^{-1}$, for every $x\in X_2$. Thus,   from  Lemma \ref{measures} we get
$$
\mu_1(\psi^{-1}([\eta]_n))=\mu_1(\phi^{-1}([1_H]_n)).
$$
Since $\{\psi^{-1}([a]_n): a\in H/\Lambda_n\}$ is a clopen partition of $X_1$, we have
$$
1=\sum_{a\in H/\Lambda_n}\mu(\psi^{-1}([a]_n))=[H:\Lambda_n]\mu_1(\phi^{-1}([1_H]_n)).
$$
This implies 
$$
\mu_1(U_1)=\mu_1(\psi^{-1}([1_H]_n))=\frac{1}{[H:\Lambda_n]},
$$
which implies that $[X_1:U_1]=[H:\Lambda_n]=[X_2: U_2]<\infty$.
\end{proof}

\subsection{A sufficient condition for topological full groups to be isomorphic.}


Let $G$ be a   countable infinite  residually finite group. Let $X=G_{(\Gamma_n)}$ be the   free exact odometer associated with the scale $(\Gamma_n)_{n\in\mathbb{N}}$ of $G$.     Let $(U_n)_{n\in\mathbb{N}}$ be a decreasing sequence of clopen subgroups of $X$ such that $\bigcap_{n\in\mathbb{N}}U_n=\{1_X\}$. 

For every $m\in \mathbb{N}$, consider $X/U_m=\{gU_m: g\in X\}$ and let define
$$[[L_{X}]]_{U_m}=\{f\in [[L_{X}]]: \forall a\in X/U_m, \exists \xi_a\in X, \mbox{ such that } f|_{aU_m}(x)=\xi_a x, \forall x\in aU_m\},$$
and
$$
[[L_{X}]]_{U_m}^0=\{ f\in [[L_{X}]]_{U_m}: \forall a\in X/U_m, \exists \xi_a\in aU_ma^{-1}, \mbox{ such that } f|_{aU_m}=L_{X}^{\xi_a}|_{aU_m} \}.
$$
Here, $[[L_{X}]]_{U_m}$ consists of those elements of the topological full group 
that, on each atom of the partition $\mathcal{P}=\{ a U_m : a \in X/U_m \}$, 
coincide with the action of some element in $X$.  
The subgroup $[[L_{X}]]_{U_m}^0$ contains the elements of $[[L_{X}]]_{U_m}$ that leave  each atom of $\mathcal{P}$  invariant.  
In particular, we have $[[L_{X}]]_{[1_X]_m} = [[L_{X}]]_{m}$.

Analogous definitions can be made for $[[R_{X}]]_{U_m}$ and $[[R_{X}]]_{U_m}^0$ 
by using the partition into right cosets.

Let $m \in \mathbb{N}$, and    let $F_m$ be selector from the cosets $gU_m$, ($g\in X$) selecting $1_X$ from $U_m$.  If $f \in [[L_{X}]]_{U_m}^0$, then for each $a \in F_m$ there exists a unique 
$w \in U_m$ such that
\[
f|_{aU_m}(x) = a w a^{-1} x, \quad \forall x \in a U_m.
\]

This allows us to define, for every permutation $\sigma \in \sym(F_m)$, 
a map
\[
\Phi_\sigma : [[L_{X}]]_{U_m}^0 \longrightarrow [[L_{X}]]_{U_m}^0
\]
by
\[
\Phi_\sigma(f)|_{aU_m}(x) = a w_{\sigma^{-1}(a)} a^{-1} x, 
\quad \forall x \in a U_m, \; a \in F_m, \; f \in [[L_{X}]]_{U_m}^0,
\]
where $w_b \in U_m$ is determined by 
\[
f|_{bU_m}(x) = b w_b b^{-1} x, \quad \forall x \in b U_m, \; b \in F_m.
\]

The map $\Phi_\sigma$ defines an automorphism of $[[L_{X}]]_{U_m}^0$, 
and the map
\[
\Phi_m : \sym(F_m) \longrightarrow \aut([[L_{X}]]_{U_m}^0), 
\quad \Phi_m(\sigma) = \Phi_\sigma,
\]
is a group homomorphism.

\begin{lemma}\label{direct-limit}
Let $G$ be a   countable infinite  residually finite group. Let $X=G_{(\Gamma_n)}$ be the  free exact odometer associated to the scale $(\Gamma_n)_{n\in\mathbb{N}}$ of $G$.     Let $(U_n)_{n\in\mathbb{N}}$ be a decreasing sequence of clopen subgroups of $X$ such that $\bigcap_{n\in\mathbb{N}}U_n=\{1_X\}$. Then

For every $m\in \mathbb{N}$, 
\begin{equation}\label{eq-iso}
[[L_{X}]]_{U_m} \cong [[L_{X}]]_{U_m}^0\rtimes_{\Phi_n}\sym(X/U_m).
\end{equation}
 Furthermore, $[[L_{X}]]$ is isomorphic to the direct limit 
\begin{equation}\label{eq-iso-2}
[[L_{X}]]_{U_1}^0\rtimes_{\Phi_1}\sym(X/U_1) \xrightarrow{\phi_1}[[L_{X}]]_{U_2}^0\rtimes_{\Phi_2}\sym(X/U_2) \xrightarrow{\phi_2}   \cdots, 
\end{equation}
where $\phi_m= p_{m+1}\circ i_m \circ p_m^{-1}$, $i_m:[[L_{X}]]_{U_m}\to [[L_{X}]]_{U_{m+1}}$ is the inclusion map, and $p_m$ is the isomorphism given in (\ref{eq-iso}).
\end{lemma}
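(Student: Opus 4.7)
My plan is to exhibit the isomorphism in (\ref{eq-iso}) explicitly and then use it, together with an increasing-union argument, to obtain the direct limit description in (\ref{eq-iso-2}). Fix a set $F_n$ of representatives as in the statement and, for each $\sigma \in \sym(F_n) \cong \sym(X/U_n)$, introduce the ``permutation element'' $\tau_\sigma \in [[X]]_{U_n,L}$ by
\[
\tau_\sigma|_{aU_n}(x) = \sigma(a) a^{-1} x, \qquad a \in F_n,\ x \in aU_n,
\]
which sends the atom $aU_n$ onto $\sigma(a)U_n$; a direct calculation gives $\tau_\sigma \circ \tau_\rho = \tau_{\sigma\rho}$. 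Given $f \in [[X]]_{U_n,L}$ with $f|_{aU_n}(x) = \xi_a x$, let $\sigma_f(a) \in F_n$ be the representative of $f(aU_n)$, so that $\xi_a = \sigma_f(a) u_a a^{-1}$ for a unique $u_a \in U_n$. Then $h := f \circ \tau_{\sigma_f}^{-1}$ acts on $bU_n$ as left multiplication by $b u_{\sigma_f^{-1}(b)} b^{-1} \in b U_n b^{-1}$, so $h \in [[X]]_{U_n,L}^0$ and $f = h \circ \tau_{\sigma_f}$ uniquely.

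The decisive step is the commutation identity
\[
\tau_\sigma \circ h = \Phi_\sigma(h) \circ \tau_\sigma, \qquad h \in [[X]]_{U_n,L}^0, \ \sigma \in \sym(F_n),
\]
which I will verify by evaluating both sides at $x \in aU_n$: writing $h|_{aU_n}(x) = a w_a a^{-1} x$, the left side equals $\sigma(a) w_a a^{-1} x$ (since $h(x) \in aU_n$), while the right side, using that $\tau_\sigma(x) \in \sigma(a)U_n$ and the definition $\Phi_\sigma(h)|_{\sigma(a)U_n}(y) = \sigma(a) w_{\sigma^{-1}(\sigma(a))} \sigma(a)^{-1} y = \sigma(a) w_a \sigma(a)^{-1} y$, yields the same $\sigma(a) w_a a^{-1} x$. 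From this identity,
\[
(h_1 \circ \tau_{\sigma_1}) \circ (h_2 \circ \tau_{\sigma_2}) = h_1 \circ \Phi_{\sigma_1}(h_2) \circ \tau_{\sigma_1 \sigma_2},
\]
so $(h,\sigma) \mapsto h \circ \tau_\sigma$ is a group homomorphism from $[[X]]_{U_n,L}^0 \rtimes_{\Phi_n} \sym(X/U_n)$ to $[[X]]_{U_n,L}$; the decomposition in the previous paragraph furnishes its inverse $p_n: f \mapsto (f \circ \tau_{\sigma_f}^{-1}, \sigma_f)$, proving (\ref{eq-iso}).

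For (\ref{eq-iso-2}), the inclusion $[[X]]_{U_n,L} \subseteq [[X]]_{U_{n+1},L}$ is immediate because the $U_{n+1}$-coset partition refines the $U_n$-one, so every piecewise left multiplication on the coarser partition is also piecewise on the finer one. To see $[[X]]_L = \bigcup_n [[X]]_{U_n,L}$, take $f \in [[X]]_L$ with finite clopen partition $\{A_1, \dots, A_k\}$ and elements $\xi_i$ satisfying $f|_{A_i}(x) = \xi_i x$; since $\bigcap_n U_n = \{1_X\}$ and the $U_n$ are clopen in the compact metric group $X$, a standard compactness argument shows that $\{U_n\}_{n \in \mathbb{N}}$ is a neighborhood basis of $1_X$, whence for $n$ large enough every $A_i$ is a union of $U_n$-cosets and so $f \in [[X]]_{U_n,L}$. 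Conjugating this increasing union through the isomorphisms $p_n$ gives the direct limit description (\ref{eq-iso-2}) with the stated transition maps $\phi_n = p_{n+1} \circ i_n \circ p_n^{-1}$.

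The main obstacle I expect is bookkeeping in the commutation identity $\tau_\sigma \circ h = \Phi_\sigma(h) \circ \tau_\sigma$: the $\sigma^{-1}$ appearing in the definition of $\Phi_\sigma$ must be aligned precisely with the convention chosen for the semidirect product multiplication so that the map $(h,\sigma) \mapsto h \circ \tau_\sigma$ really is a homomorphism rather than an anti-homomorphism; once this indexing is nailed down, the rest reduces to routine verification.
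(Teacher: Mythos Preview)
Your proposal is correct and follows essentially the same approach as the paper: both introduce the permutation element $\tau_\sigma$ (the paper calls it $\psi_\sigma$), decompose $f = h \circ \tau_{\sigma_f}$ with $h \in [[X]]_{U_n,L}^0$, and verify that $p_n(f) = (h,\sigma_f)$ is an isomorphism by the same underlying computation---you isolate the commutation identity $\tau_\sigma \circ h = \Phi_\sigma(h)\circ\tau_\sigma$ explicitly, whereas the paper embeds the same calculation in a direct expansion of $g\circ f(x)$. The direct-limit step is likewise the same, with your compactness argument just spelling out the paper's remark that the coset partitions are nested and generate the topology.
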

\begin{proof}  For every $n\in \mathbb{N}$,   let $F_n$ be selector from the cosets $gU_n$, ($g\in X$) selecting $1_X$ from $U_n$.  From \cite[Proposition 2.3.2]{R10} we have $|F_n|<\infty$. 

Let $n\in\mathbb{N}$ and $f\in [[L_X]]_{U_n}$. 
For every $a\in F_n$, let $\xi^f_a\in X$ be  such that $f|_{aU_n}(x)=\xi^f_a x$, for every  $x\in aU_n$. Let $\sigma_f(a) \in F_n$ be such that $\xi^f_a a\in \sigma_f(a)U_n$. We have $\sigma_f\in \sym(F_n)$, due to $f(aU_n)=\sigma_f(a)U_n$. Furthermore,
$$
f|_{aU_n}(x)= \xi^f_a x=(\xi^fa)(a^{-1}x)= (\sigma_f(a)u_a^f)a^{-1}x,
$$
where $u_a^f\in U_n$ is such that $\xi^f_a a=\sigma_f(a)u_a^f$.   Let $\psi_{\sigma_f}:X\to X$ be  the map given by $\psi_{\sigma_f}|_{aU_n}(x)=\sigma_f(a)a^{-1}x$, for every $x\in aU_n$ and $a\in F_n$. Then
$$
f=\varphi_{f}\circ \psi_{\sigma_f},
$$
where $\varphi_f|_{bU_n}(y)=bu_{\sigma_f^{-1}(b)}^fb^{-1}y$, for every $y\in bU_n$ and $b\in F_n$. Observe that $\psi_{\sigma_f}\in [[L_X]]$ and $\varphi_f\in [[L_X]]_{U_n}^0$.  

If $g, f\in [[L_X]]_{U_n}$ and $x\in aU_n$, then
\begin{eqnarray*}
g\circ f(x) &  = & \sigma_{g\circ f}(a)u_{\sigma_f(a)}^g u_a^f a^{-1}x\\
& = &\left(\sigma_{g\circ f}(a)u_{\sigma_f(a)}^g u_a^f\sigma_{g\circ f}(a)^{-1}\right)\sigma_{g\circ f}(a)a^{-1}x\\
&=&  \left(\sigma_{g\circ f}(a)u_{\sigma_f(a)}^g\sigma_{g\circ f}(a)^{-1}\right)\left( \sigma_{g\circ f}(a)u_a^f\sigma_{g\circ f}(a)^{-1}\right)\sigma_{g\circ f}(a)a^{-1}x  \\
&=& \varphi_g\circ \Phi_{\sigma_g}(\varphi_f)\circ\psi_{\sigma_g\circ\sigma_f}(x).
\end{eqnarray*}

It follows that the map
\[
p_n : [[L_X]]_{U_n} \longrightarrow [[L_X]]_{U_n}^0 \rtimes_{\Phi_n} \sym(X/U_n), 
\quad p_n(f) = (\varphi_f, \sigma_f), \quad \forall f \in [[L_X]]_{U_n},
\]
is a group homomorphism.  
It is straightforward to verify that $p_n$ is in fact an isomorphism.  

Since the sequence of partitions $\{ a U_n : a \in F_n \}_{n \in \mathbb{N}}$ is nested and generates the topology of $X$, 
we conclude that $[[L_X]]$ is isomorphic to the direct limit given in (\ref{eq-iso-2}).
\end{proof}

\begin{proposition}\label{final-sufficient}
Let $G$ and $H$ be infinite finitely generated residually finite groups.   Let $X_1$ and $X_2$ be free exact odometers of $G$ and $H$, respectively.  If there exist clopen subgroups $U_1\subseteq X_1$ and $U_2\subseteq X_2$,  such that $U_1$ and $U_2$ are isomorphic as topological groups, and $[X_1:U_1]=[X_2:U_2]<\infty$, then $[[L_{X_1}]]$ and $[[L_{X_2}]]$ are isomorphic.
\end{proposition}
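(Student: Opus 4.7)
The plan is to invoke Lemma \ref{direct-limit}, which writes each $[[X_k]]_L$ (for $k\in\{1,2\}$) as a direct limit along any decreasing chain of clopen subgroups of $X_k$ with trivial intersection, and to build matching chains level by level using the given isomorphism $\rho:U_1\to U_2$. The isomorphism $\alpha$ at the start of Section 3 exchanges the left and right actions, so it suffices to treat $S=L$. Throughout, write $X_1=G_{(G_n)}$, $X_2=H_{(H_n)}$, and $m=[X_1:U_1]=[X_2:U_2]$.

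First I would build compatible subgroup chains. Since $U_2$ is a clopen subgroup of the profinite group $X_2$, it is itself profinite and admits a decreasing sequence $U_2=W_1\supseteq W_2\supseteq\cdots$ of clopen subgroups with $\bigcap_n W_n=\{1_{X_2}\}$. Setting $V_n^{(2)}=W_n$ and $V_n^{(1)}=\rho^{-1}(W_n)$ produces decreasing chains of clopen subgroups of $X_1$ and $X_2$ respectively, with $V_1^{(k)}=U_k$, trivial intersections, and $\rho$ restricting to a topological isomorphism $V_n^{(1)}\to V_n^{(2)}$ at every level; in particular $[X_1:V_n^{(1)}]=[X_2:V_n^{(2)}]$ for all $n$. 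Next, choose nested transversals $F_n^{(k)}\subseteq F_{n+1}^{(k)}\subseteq X_k$ for $X_k/V_n^{(k)}$ containing $1_{X_k}$, and inductively construct bijections $\beta_n:F_n^{(1)}\to F_n^{(2)}$ that cover each other under the projections $X_k/V_{n+1}^{(k)}\to X_k/V_n^{(k)}$; this is possible because the fibers on the two sides have matching cardinality $[V_n^{(k)}:V_{n+1}^{(k)}]$.

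By Lemma \ref{direct-limit}, for each $n$ we have $[[X_k]]_{V_n^{(k)},L}\cong [[X_k]]_{V_n^{(k)},L}^0\rtimes_{\Phi_n^{(k)}}\sym(X_k/V_n^{(k)})$, and $[[X_k]]_{V_n^{(k)},L}^0$ is parametrized by tuples $(u_a)_{a\in F_n^{(k)}}\in (V_n^{(k)})^{F_n^{(k)}}$ via $f|_{aV_n^{(k)}}(x)=au_a a^{-1}x$, with composition corresponding to pointwise multiplication. The data $(\rho|_{V_n^{(1)}},\beta_n)$ then induces a candidate isomorphism $\alpha_n:[[X_1]]_{V_n^{(1)},L}\to [[X_2]]_{V_n^{(2)},L}$ given by
\[
\alpha_n\bigl((u_a)_{a\in F_n^{(1)}},\sigma\bigr)=\bigl((\rho(u_{\beta_n^{-1}(b)}))_{b\in F_n^{(2)}},\;\beta_n\sigma\beta_n^{-1}\bigr).
\]
Because $\rho$ is a topological group isomorphism and the action $\Phi_n^{(k)}$ is simply by permutation of the index set, $\alpha_n$ is a group isomorphism.

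The main obstacle is to verify compatibility with the direct limit, that is, $\alpha_{n+1}\circ i_n^{(1)}=i_n^{(2)}\circ\alpha_n$ for the inclusion $i_n^{(k)}:[[X_k]]_{V_n^{(k)},L}\hookrightarrow [[X_k]]_{V_{n+1}^{(k)},L}$. Concretely, one computes how the data $((u_a),\sigma_f)$ of $f\in[[X_1]]_{V_n^{(1)},L}$ transforms when $f$ is re-expressed over the finer partition $\{atV_{n+1}^{(1)}:a\in F_n^{(1)},\,t\in T^{(n)}\}$ for a transversal $T^{(n)}$ of $V_{n+1}^{(1)}$ in $V_n^{(1)}$, and checks that the analogous computation on the $X_2$-side matches under $\alpha_{n+1}$. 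Choosing the $X_2$-side transversal to be $\rho(T^{(n)})$ and setting $\beta_{n+1}(at)=\beta_n(a)\rho(t)$, the matching follows from the fact that $\rho$ is a group homomorphism and sends $V_{n+1}^{(1)}$ onto $V_{n+1}^{(2)}$, so that the multiplication and coset decomposition appearing in the refinement correspond on both sides. Once this commutativity is established, passing to the direct limit yields $[[X_1]]_L\cong[[X_2]]_L$; the $S=R$ case follows by conjugating with $\alpha$.
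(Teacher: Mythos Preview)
Your proposal is correct and follows essentially the same route as the paper: both invoke Lemma~\ref{direct-limit} and transport the direct-limit description along the isomorphism $\rho$ together with a matched system of coset representatives. The paper packages this by first extending $\rho$ to a single homeomorphism $\phi:X_1\to X_2$ via $\phi(x)=\tau(a)\rho(a^{-1}x)$ for $x\in aU_1$ and then checking that conjugation $f\mapsto\phi f\phi^{-1}$ sends $[[X_1]]^0_{V_n,L}$ to $[[X_2]]^0_{\phi(V_n),L}$ compatibly with the $\sym$-actions; your level-wise maps $\alpha_n$ are precisely these conjugations written out in the wreath-product coordinates (with your choice $\beta_{n+1}(at)=\beta_n(a)\rho(t)$ recovering $\phi$ on the nested transversals), and your compatibility verification is the same computation.
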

\begin{proof}

For $i=1,2$,   let $D_i\subseteq X_i$ be  a  selector from the cosets  $gU_i$, ($g\in X_i$) selecting $1_{X_i}$ from $U_i$. 
Let $\tau:D_1\to D_2$ be a bijection such that $\tau(1_{X_1})=1_{X_2}$.  Let $\phi:U_1\to U_2$ be a topological isomorphism.  We can extend $\phi:X_1\to X_2$ as follows
$$
\phi(x)=\tau(a)\phi(a^{-1}x), \mbox{ for every } x\in aU_1 \mbox{  and  } a\in D_1.
$$
The map $\phi$ is a well-defined homeomorphism, and its restriction to $U_1$ corresponds to the original isomorphism between $U_1$ and $U_2$.  

Let $f\in [[L_{X_1}]]$ be such that if $x\in aU_1$ and $\xi\in X_1$ is such that $f(x)=\xi x$, then $\xi\in aU_1a^{-1}$, for every $a\in D_1$. In other words, $f$ leaves invariant the left cosets of $X_1/U_1$.  Thus, if $y\in \tau(a)U_2$ and $\xi\in aU_1a^{-1}$ is such that $f(\phi^{-1}(y))=\xi\phi^{-1}(y)$, then
\begin{eqnarray*}
 \phi\circ f \circ \phi^{-1}(y) = \phi(\xi\phi^{-1}(y))& = & \tau(a)\phi(a^{-1}\xi\phi^{-1}(y))\\
    &=& \tau(a)\phi(a^{-1}\xi a a^{-1}\phi^{-1}(y))\\
    &=& \tau(a) \phi(a^{-1}\xi a)\phi(a^{-1}\phi^{-1}(y))\\
    &=& \tau(a)\phi(a^{-1}\xi a)\tau(a)^{-1}\tau(a)\phi(a^{-1}\phi^{-1}(y))\\
    &=& \tau(a)\phi(a^{-1}\xi a)\tau(a)^{-1}y.
\end{eqnarray*}
This implies that $\alpha(f)=\phi\circ f \circ \phi^{-1}\in [[L_{X_2}]]$ leaves invariant the left cosets of $X_2/U_2$.


Let $(V_n)_{n\in\mathbb{N}}$ be a decreasing sequence of clopen subgroups of $X_1$  such that $\bigcap_{n\in\mathbb{N}}V_n=\{1_{X_1}\}$, and suppose that $V_1= U_1$. For every $n\in \mathbb{N}$, we have $[X_1:V_n]<\infty$ (see \cite[Proposition 2.3.2]{R10}). The choice of $(V_n)_{n\in\mathbb{N}}$ implies that $\{gV_n: g\in X_1\}$ is finer than $\{gU_1: g\in X_1\}$, and the same is true with respect to $\{g\phi(V_n): g\in X_2\}$ and $\{gU_2: g\in X_2\}$.   For $i=1,2$, let $  F_n^{i}\subseteq X_i$ be a selector from the cosets  $aV_n$, ($a\in X_i$) selecting $1_{X_i}$ from $V_n$. Observe that if $a\in F_n^1$ and $t\in D_1$ are such that $aV_n\subseteq tU_1$, then $\phi(aV_n)=\tau(t)\phi(t^{-1}a)\phi(V_n)$.

Let $f\in [[L_{X_1}]]_{V_n}^0$. Let $y\in X_2$,  $a\in F^1_n$, and $t\in D_1$ such that $\phi^{-1}(y)\in aV_n \subseteq tU_1$. Let $\xi\in aV_na^{-1}$ be such that $f|_{aV_n}(x)=\xi x$, for every $x\in aV_n$. We have
\begin{eqnarray*}
\alpha(f)= \phi\circ f \circ \phi^{-1}(y) = \phi(\xi\phi^{-1}(y))& = & \tau(t)\phi(t^{-1}\xi\phi^{-1}(y))\\
    &=& \tau(t)\phi(t^{-1}\xi t t^{-1}\phi^{-1}(y))\\
    &=& \tau(t) \phi(t^{-1}\xi t)\phi(t^{-1}\phi^{-1}(y))\\
    &=& \tau(t)\phi(t^{-1}\xi t)\tau(t)^{-1}\tau(t)\phi(t^{-1}\phi^{-1}(y))\\
    &=& \tau(t)\phi(t^{-1}\xi t)\tau(t)^{-1}y.
\end{eqnarray*}
This implies that $\alpha(f)|_{\tau(t)\phi(t^{-1}a)\phi(V_n)}(y)=\tau(t)\phi(t^{-1}\xi t)\tau(t)^{-1}y.$ On the other hand, observe that 
\begin{eqnarray*}
(\tau(t)\phi(t^{-1}\xi t)\tau(t)^{-1})(\tau(t)\phi(t^{-1}a)\phi(V_n)) & = &\tau(t)\phi(t^{-1}\xi t)\phi(t^{-1}a)\phi(V_n)\\
   &=& \tau(t)\phi(t^{-1}\xi a )\phi(V_n)\\
   &=& \tau(t)\phi(t^{-1}aa^{-1}\xi a)\phi(V_n)\\
   &=& \tau(t)\phi(t^{-1}a)\phi(a^{-1}\xi a V_n)\\
    &=&\tau(t)\phi(t^{-1}a)\phi(V_n).
\end{eqnarray*}
 This shows that the restriction
\[
\alpha|_{[[L_{X_1}]]^0_{V_n}} : [[L_{X_1}]]^0_{V_n} \longrightarrow [[L_{X_2}]]^0_{\phi(V_n)}
\]
is a well-defined group homomorphism. It is straightforward to verify that $\alpha$ is an isomorphism.

Moreover, since
\[
\alpha\bigl(\Phi_\sigma(f)\bigr) = \Phi_\sigma\bigl(\alpha(f)\bigr), \quad 
\forall f \in [[L_{X_1}]]^0_{V_n}, \; \sigma \in \sym(X_1/V_n),
\]
the map
\[
r_n : [[L_{X_1}]]^0_{V_n} \rtimes_{\Phi_n} \sym(m_n) 
\longrightarrow [[L_{X_2}]]^0_{\phi(V_n)} \rtimes_{\Phi_n} \sym(m_n), 
\quad r_n(f, \sigma) = (\alpha(f), \sigma),
\]
is an isomorphism that is compatible with the direct limit in Lemma~\ref{direct-limit}, where 
$m_n = [X_1 : V_n] = [X_2 : \phi(V_n)].$
\end{proof}

\begin{proof}[Proof of Theorem \ref{main_full_group}]
The map    $\alpha:[[R_{X_1}]]\to [[L_{X_1}]]$ given by $\alpha(f)(x)=(f(x^{-1}))^{-1}$, for every $f\in [[R_{X_1}]]$ and $x\in X$, is an isomorphism. This implies the equivalence between (1) and (2). The equivalence with (3) follows from Propositions \ref{necessary}\ and \ref{final-sufficient}.
\end{proof}

\begin{proposition}\label{COE-1}
 Let $G$ and $H$ be infinite finitely generated residually finite groups. Let   $X$ and $Y$ be free exact odometers of $G$ and $H$, respectively. If $(X,\alpha_X,G)$ and $(Y, \alpha_Y,H)$ are  continuously orbit equivalent, then   $[[L_X]]\cong[[L_{Y}]]$.
\end{proposition}
\begin{proof}
  If $(X,\alpha_X,G)$ and $(Y, \alpha_Y,H)$ are   continuously orbit equivalent, then there exist an isomorphism $\phi:[[\alpha_X]]\to [[\alpha_Y]]$ and a homeomorphism $\varphi: X\to Y$ such that $\phi(f)=\varphi\circ f\circ \varphi^{-1}$ (see \cite{CM16} and \cite[Theorem 2.6 and Remark 3]{M11}). Arguing as in the proof of Lemma~\ref{spatially-defined}, we deduce that $\phi$ is uniformly 
continuous with respect to the uniform topology on $\mathrm{Homeo}(X)$ and therefore extends 
to an isomorphism  $\phi:\overline{[[\alpha_X]]}\to \overline{[[\alpha_Y]]}$. The conclusion then follows from Lemma  \ref{closure-1} (see Remark \ref{closure-1-remark}). 
\end{proof}

The converse of Proposition \ref{COE-1} is not true. This is a consequence of the next result.


\begin{corollary}\label{non-COE-1}
There exist finitely generated residually finite groups $G$ and $H$, with $G$ amenable and $H$ non-amenable, and   free exact odometers $X$ and $Y$ of $G$ and $H$, respectively,  such that $[[L_X]]\cong[[L_Y]]$.
\end{corollary}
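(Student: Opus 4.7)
The plan is to invoke Theorem~\ref{characterization_full_group}: to produce the desired isomorphism of topological full groups, it suffices to exhibit clopen subgroups $U_1 \subseteq G_{(G_n)}$ and $U_2 \subseteq H_{(H_n)}$ that are isomorphic as topological groups and have the same finite index. The cleanest way to do this is to take $U_1$ and $U_2$ to be the full odometers themselves (index $1$), so the problem reduces to finding a finitely generated amenable residually finite group $G$ and a finitely generated non-amenable residually finite group $H$ whose profinite completions, with respect to appropriate cofinal sequences $(G_n)$ and $(H_n)$, are isomorphic as topological groups.

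The key input is the reference \cite{KS23}, alluded to in the introduction, which provides pairs of finitely generated residually finite groups having the same profinite completion (i.e., topologically isomorphic profinite completions) but lying on opposite sides of the amenable/non-amenable divide. Concretely, I would quote from \cite{KS23} the existence of such a pair $(G,H)$ with $G$ amenable and $H$ non-amenable, together with normal finite-index chains $(G_n)_{n\in\mathbb{N}}$ and $(H_n)_{n\in\mathbb{N}}$ with trivial intersection such that the resulting profinite completions are topologically isomorphic. Under the identification of the odometer $G_{(G_n)}$ with the profinite completion $\varprojlim G/G_n$ (and likewise for $H$), this topological isomorphism is exactly a topological group isomorphism $\phi: G_{(G_n)} \to H_{(H_n)}$ between the full odometers.

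With $G$, $H$, $(G_n)$, $(H_n)$, and $\phi$ in hand, the rest is immediate: take $U_1 = G_{(G_n)}$ and $U_2 = H_{(H_n)}$, both clopen in themselves, of index $1$, and isomorphic as topological groups via $\phi$. Condition~(3) of Theorem~\ref{characterization_full_group} is satisfied, and therefore $[[G_{(G_n)}]]_S \cong [[H_{(H_n)}]]_S$ for $S \in \{L,R\}$, as required.

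The only real obstacle is verifying that \cite{KS23} does supply a pair with the exact combination of features needed (finitely generated, residually finite, one amenable and one not, and profinite completions isomorphic as \emph{topological} groups rather than merely as abstract groups). The topological upgrade, however, is automatic for finitely generated profinite groups by \cite[Theorem 1.1]{NS07}, which was already used in the proof of Corollary~\ref{COE-1}: every abstract isomorphism between finitely generated profinite groups is a topological isomorphism. So if \cite{KS23} provides an abstract isomorphism of profinite completions, this is enough. This reduces the whole argument to citing the existence result and applying Theorem~\ref{characterization_full_group}.
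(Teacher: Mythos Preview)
Your proposal is correct and follows essentially the same route as the paper: quote \cite{KS23} for a finitely generated amenable $G$ and non-amenable $H$ with isomorphic profinite completions, upgrade the isomorphism to a topological one via \cite[Theorem~1.1]{NS07}, and then apply condition~(3) of Theorem~\ref{characterization_full_group} with $U_1=G_{(G_n)}$ and $U_2=H_{(H_n)}$ at index~$1$. The paper's proof is exactly this, with the additional one-line remark that finitely generated residually finite groups admit scales (so that $\widehat{G}$ and $\widehat{H}$ really are odometers in the sense of the paper).
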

\begin{proof}
By \cite{KS23}, there exist finitely generated groups $G$ and $H$, with $G$ amenable and $H$ non-amenable, whose profinite completions \( \widehat{G} \) and \( \widehat{H} \) are isomorphic. Since the groups $G$ and $H$ are finitely generated, the groups $\widehat{G}$ and $\widehat{H}$ are finitely generated as profinite groups. Then, \cite[Theorem 1.1]{NS07} implies that if $\phi: \widehat{G}\to \widehat{H}$ is an isomorphism, then $\phi$ is continuous.  On the other hand, since $G$ and $H$ are finitely generated, the groups $\widehat{G}$ and $\widehat{H}$ are odometers, in particular, they are free exact odometers. Thus, taking $X=U_1= \widehat{G}$ and $Y=U_2=\widehat{H}$, from  Theorem \ref{main_full_group} we deduce that $[[L_X]]$ and $[[L_Y]]$ are isomorphic. 
 \end{proof}

\begin{remark}\label{Remark_1}
{\rm  The odometers  $(X,\alpha_X,G)$ and $(Y,\alpha_Y,H)$  described in Corollary \ref{non-COE-1} are not continuously orbit equivalent, because their topological full groups   $[[\alpha_X]]$ and $[[\alpha_Y]]$} are not isomorphic: one is amenable while the other is not (see \cite[Corollary 4.7]{CM16}).  
\end{remark}  

 \section{Stabilized automorphism groups of  odometers.}\label{sec:stabilized}

 
The purpose of this section is to describe the stabilized subgroup of a  free exact odometer $X$ in terms of the associated scale, in order to show that it coincides with  $[[R_X]]$.

   Let $X$ be the exact odometer associated with the scale $(\Gamma_n)_{n\in\mathbb{N}}$ of the countable residually finite group $G$. Let $n\in \mathbb{N}$.  Since  $\Gamma_n$ is normal, every cylinder set of level $n$ is invariant under the action of $\Gamma_n$.  We denote $\alpha_{[a]_n}$ the action   given by the restriction of   $\alpha_X$ to   $\Gamma_n\times [a]_n$, for every  $a\in G/\Gamma_n$.

 \begin{lemma}\label{min comp}
  Let  $G$ be an infinite residually finite group. Let $(\Gamma_n)_{n\in\mathbb{N}}$ be a scale of $G$, and $X=G_{(\Gamma_n)}$ the associated odometer.  Then for every $n\in\mathbb{N}$, the dynamical system $(X, \alpha_X|_{\Gamma_n},\Gamma_n)$  has exactly $[G:\Gamma_n]$  minimal components, each component  corresponding to a cylinder set of level $n$ of $X$.  Furthermore, for every $a\in G/\Gamma_n$ we have that $([a]_n, \alpha_{[a]_n}, \Gamma_n)$ is conjugate to $([1_X]_n, \alpha_{[1_X]_n}, \Gamma_n)$.  
 \end{lemma}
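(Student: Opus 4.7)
The plan is to establish three things in order: (i) each level-$n$ cylinder is invariant under the $\Gamma_n$-action, (ii) the restricted system on $[1_X]_n$ is conjugate to a left odometer of $\Gamma_n$ and is therefore minimal, and (iii) any two level-$n$ cylinders are conjugate as $\Gamma_n$-systems.

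For (i), fix $n\in\mathbb{N}$ and $a=g\Gamma_n \in G/\Gamma_n$. Since $\Gamma_n$ is normal in $G$, for every $\gamma\in\Gamma_n$,
\[
\gamma g\Gamma_n \;=\; g(g^{-1}\gamma g)\Gamma_n \;=\; g\Gamma_n,
\]
so $(\gamma\cdot x)_n = x_n$ for every $x\in[a]_n$, showing that $[a]_n$ is $\Gamma_n$-invariant. This decomposes $X$ as a clopen partition into the $[G:\Gamma_n]$ invariant pieces $\{[a]_n : a\in G/\Gamma_n\}$.

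For (ii), observe that $(\Gamma_{n+k})_{k\geq 0}$ is a scale of $\Gamma_n$: each $\Gamma_{n+k}$ is normal in $G$ and hence in $\Gamma_n$, has finite index in $\Gamma_n$, and $\bigcap_{k}\Gamma_{n+k}=\{1_G\}$. The map
\[
[1_X]_n \longrightarrow (\Gamma_n)_{(\Gamma_{n+k})_{k\geq 0}}, \qquad (x_k)_{k\in\mathbb{N}} \longmapsto (x_{n+k})_{k\geq 0},
\]
is well defined because for $x\in[1_X]_n$ the coset $x_{n+k}$ projects to $1_X\in G/\Gamma_n$ and so is represented by an element of $\Gamma_n$. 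A direct check shows it is a continuous bijection, hence (by compactness) a homeomorphism, and it intertwines the left $\Gamma_n$-action on $[1_X]_n$ with the left odometer action of $\Gamma_n$ on its own odometer. Since left odometers are minimal, $([1_X]_n,\Gamma_n)_L$ is minimal.

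For (iii), given $a=g\Gamma_n$, let $\phi=f^R_{\tau(g)}$. By Lemma~\ref{characterization_automorphism}, $\phi\in\aut(X,G)_L$, so $\phi$ commutes with every left translation by an element of $G$, and in particular with the $\Gamma_n$-action. For $x\in[a]_n$, the $n$-th coordinate of $\phi(x)=x\tau(g)^{-1}$ equals $g\Gamma_n\cdot g^{-1}\Gamma_n=\Gamma_n$, so $\phi$ restricts to a $\Gamma_n$-equivariant homeomorphism $[a]_n \to [1_X]_n$. Combined with (ii), each $([a]_n,\Gamma_n)_L$ is minimal and conjugate to $([1_X]_n,\Gamma_n)_L$. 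Since the sets $[a]_n$ form a clopen partition of $X$ into invariant minimal subsystems, they are exactly the minimal components of $(X,\Gamma_n)_L$. No step is truly an obstacle; the only conceptually interesting point is (iii), where one exploits the fact, encoded in Lemma~\ref{characterization_automorphism}, that right multiplication by an element of $X$ commutes with the left $G$-action, which is precisely what makes $\phi$ a $\Gamma_n$-equivariant identification between cylinders at different left cosets.
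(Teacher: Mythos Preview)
Your proof is correct. The conjugacy in (iii) is exactly the paper's argument: both use the right translation $x\mapsto x\,\tau(g)^{-1}$ to carry $[a]_n$ onto $[1_X]_n$, and the observation that right multiplication commutes with the left action is precisely what the paper invokes.

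The genuine difference is in how minimality of each cylinder is obtained. The paper argues dynamically: it uses the minimality of the ambient left $G$-odometer to find, for any $x,y$ in the same level-$n$ cylinder, a sequence $g_i\in G$ with $g_i x\to y$, and then observes that the set of return times to a level-$n$ cylinder is $\Gamma_n$, forcing $g_i\in\Gamma_n$ eventually. You argue structurally in (ii): you identify $[1_X]_n$ with the left $\Gamma_n$-odometer built from the scale $(\Gamma_{n+k})_{k\ge 0}$, which is minimal by the general theory already recalled in the paper, and then transport minimality to the other cylinders via the conjugacy in (iii). Your route yields the extra information that each $([a]_n,\Gamma_n)_L$ is itself (conjugate to) a $\Gamma_n$-odometer, which is pleasant and slightly more than the lemma asks; the paper's route is shorter and avoids setting up the auxiliary identification. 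Both are standard and equally valid.
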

   \begin{proof}
      Let $n\in \mathbb{N}$.  Since  every cylinder set of level $n$ is invariant under the action of $\Gamma_n$,   each  minimal component of $(X, \alpha_X|_{\Gamma_n}, \Gamma_n)$ is contained in some cylinder set of level $n$. Conversely, the minimality of $(X, \alpha_X, G)$ implies that for $x$ and $y$ in the same cylinder set of level $n$, there exists a sequence $(g_i)_{i\in\mathbb{N}}$ of $G$ such that $(g_ix)_{i\in\mathbb{N}}$ converges to $y$. Since the set of   return times to a cylinder set of level $n$ is equal to $\Gamma_n$, we get that $g_i\in \Gamma_n$, for every sufficiently large $i$. This implies that every cylinder set of level $n$ is minimal with respect to the action of $\Gamma_n$.  From this we deduce that $(X, \alpha_x|_{\Gamma_n}, \Gamma_n)$ has exactly $[G:\Gamma_n]$ minimal components, each one corresponding to a cylinder set of level $n$. 

      Let $a\in G/\Gamma_n$ and $g_a\in G$ such that $g_a\Gamma_n=a$. The map $f:[a]_n\to [1_X]_n$, given by $f(x)=xg_a^{-1}$ for every $x\in X$, is a conjugacy between $([a]_n, \alpha_{[a]_n},\Gamma_n)$ and $([1_X]_n, \alpha_{[1_X]_n}, \Gamma_n)$. 
   \end{proof}

 \begin{lemma}\label{min comp 2}
    Let  $G$ be an infinite residually finite group. Let $(\Gamma_n)_{n\in\mathbb{N}}$ be a scale of $G$, and $X=G_{(\Gamma_n)}$ the associated odometer. Then for every $n\in\mathbb{N}$, and $a\in G/\Gamma_n$,   we have
  $$
  \aut([a]_n, \alpha_{[a]_n},\Gamma_n)=\{ R_X^\xi|_{[a]_n}: \xi\in [1_X]_n\}.
  $$
 \end{lemma}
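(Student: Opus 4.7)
The plan is to reduce the statement to the case $a = \Gamma_n$ via the conjugacy of Lemma \ref{min comp}, and then identify $[1_X]_n$ as an odometer of $\Gamma_n$ in its own right so that Lemma \ref{characterization_automorphism} applies. For the easy inclusion $\supseteq$, if $\xi \in [1_X]_n$, normality of $\Gamma_n$ gives $\xi_n = \Gamma_n$, whence $(x\xi^{-1})_n = x_n$ for every $x \in X$; thus $f^R_\xi$ preserves each cylinder $[a]_n$. Since $f^R_\xi \in \aut(X,G)_L$ by Lemma \ref{characterization_automorphism}, its restriction to $[a]_n$ commutes with the left action of $\Gamma_n$, so it lies in $\aut([a]_n,\Gamma_n)_L$.

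For the reverse inclusion, I would fix $g_a \in G$ with $g_a\Gamma_n = a$ and use the conjugacy $f : [a]_n \to [1_X]_n$, $f(x) = xg_a^{-1}$, from Lemma \ref{min comp}. Given $\phi \in \aut([a]_n,\Gamma_n)_L$, the conjugate $\tilde\phi := f\circ\phi\circ f^{-1}$ lies in $\aut([1_X]_n,\Gamma_n)_L$. The key step is to recognize $[1_X]_n$, with the group structure inherited from $X$, as being canonically identified with the odometer of $\Gamma_n$ associated with the scale $(\Gamma_{n+k})_{k\in\mathbb{N}}$ via the projection $(x_k)_{k\in\mathbb{N}}\mapsto (x_{n+k})_{k\in\mathbb{N}}$, and that this identification intertwines the left $\Gamma_n$-action on $[1_X]_n\subseteq X$ with the canonical left $\Gamma_n$-action on this odometer. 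Applying Lemma \ref{characterization_automorphism} to the latter yields $\tilde\phi(y) = y\eta^{-1}$ for some $\eta \in [1_X]_n$.

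Unwinding the conjugation gives $\phi(x) = x\,g_a^{-1}\eta^{-1}g_a$ for every $x\in [a]_n$. Setting $\xi = g_a^{-1}\eta g_a$, normality of $\Gamma_n$ ensures $\xi_n = g_a^{-1}\Gamma_n g_a = \Gamma_n$, so $\xi \in [1_X]_n$ and $\phi = f^R_\xi|_{[a]_n}$. I expect the only mildly subtle point to be the identification in the middle paragraph: one must verify that the dynamics really do match up, so that Lemma \ref{characterization_automorphism} may be invoked verbatim on $[1_X]_n$. Once that is done, the rest of the argument is bookkeeping.
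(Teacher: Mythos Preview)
Your proposal is correct and follows essentially the same route as the paper: both reduce to $[1_X]_n$ via the conjugacy $f(x)=xg_a^{-1}$ from Lemma~\ref{min comp}, invoke Lemma~\ref{characterization_automorphism} there to obtain $\tilde\phi(y)=y\eta^{-1}$, and then conjugate back and use normality of $\Gamma_n$ to land in $[1_X]_n$. You are more explicit than the paper about the identification of $([1_X]_n,\Gamma_n)_L$ with the $\Gamma_n$-odometer on the scale $(\Gamma_{n+k})_{k\in\mathbb{N}}$, which is precisely what is needed to apply Lemma~\ref{characterization_automorphism} verbatim; the paper compresses this into the phrase ``From Lemmas~\ref{min comp} and~\ref{characterization_automorphism}.''
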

\begin{proof}
Let $\xi\in [1_X]_n$. It is straightforward to check that $R_X^\xi|_{[a]_n}$ is in\\ $\aut([a]_n, \alpha_{[a]_n},\Gamma_n)$. 

If $\phi\in \aut([a]_n, \alpha_{[a]_n}\Gamma_n)$, then $f\circ \phi \circ f^{-1} \in \aut([1_X]_n, \alpha_{[1_X]_n}, \Gamma_n)$, where $f:[a]_n\to [1_X]_n$ is the conjugacy given by $f(x)=xg_a^{-1}$ for every $x\in X$, where $g_a\in G$ is some element in $[a]_n$. From Lemmas \ref{min comp} and \ref{characterization_automorphism}  we get there exists $\xi\in [1_X]_n$ such that $f\circ \phi \circ f^{-1}(f(x))=f(x)\xi^{-1}$, for every $x\in [a]_n$. Thus we have $f(\phi(x))=\phi(x)g_a^{-1}=xg_a^{-1}\xi^{-1}$, and then $\phi(x)=xg_a^{-1}\xi^{-1} g_a$, for every $x\in [a]_n$. Taking $\eta=g_a^{-1}\xi g_a$, we have $\eta\in [1_X]_n$ and $\phi=R_X^{\eta}|_{[a]_n}$. 
    \end{proof}

   \begin{lemma}\label{simetria} 
Let $(X, \alpha, G)$ be a topological dynamical system and let $H$ be a subgroup of $G$. Let $\{V_i\}_{i\in I}$ be the collection of minimal components of $(X, \alpha, H)$. Then for every $\gamma\in \aut(X, \alpha|_H, H)$ there exists $\sigma\in \sym(I)$ such that $\gamma(V_i)=V_{\sigma(i)}$, for every $i\in I$.  
   \end{lemma}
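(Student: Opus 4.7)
The plan is to observe that since $\gamma$ commutes with the $H$-action, it must send every $H$-invariant set to an $H$-invariant set, and in fact carry minimal components to minimal components; the permutation $\sigma$ will then simply record this action of $\gamma$ on the index set $I$.

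First I would fix a minimal component $V_i$ and verify three properties of $\gamma(V_i)$: (a) it is closed, because $\gamma$ is a homeomorphism; (b) it is $H$-invariant, since for any $h\in H$ one has $h\cdot \gamma(V_i)=\gamma(h\cdot V_i)=\gamma(V_i)$ using that $\gamma$ commutes with the $H$-action; and (c) it is itself minimal. Minimality is the one point worth spelling out: if $W\subseteq \gamma(V_i)$ were a nonempty closed $H$-invariant proper subset, then $\gamma^{-1}(W)$ would be a nonempty closed $H$-invariant proper subset of $V_i$, contradicting minimality of $V_i$.

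Next, since distinct minimal components of $(X,H)$ are disjoint (their intersection would be a closed $H$-invariant proper subset of each), the minimal set $\gamma(V_i)$ coincides with a unique $V_j$ in the family $\{V_i\}_{i\in I}$. Defining $\sigma(i):=j$ yields a well-defined map $\sigma:I\to I$ with $\gamma(V_i)=V_{\sigma(i)}$. Applying the same construction to $\gamma^{-1}\in\aut(X,H)$ gives a map $\sigma':I\to I$ with $\gamma^{-1}(V_i)=V_{\sigma'(i)}$, and the identities $\gamma\circ\gamma^{-1}=\id=\gamma^{-1}\circ\gamma$ force $\sigma'=\sigma^{-1}$, so $\sigma\in\sym(I)$.

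There is no real obstacle here: once one checks that homeomorphisms commuting with the action preserve the property of being closed, $H$-invariant, and minimal, the permutation $\sigma$ is immediate, and the pairwise disjointness of minimal components ensures $\sigma$ is well-defined.
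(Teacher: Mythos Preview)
Your proof is correct and follows essentially the same approach as the paper: both show that $\gamma$ sends minimal components to minimal components by checking $H$-invariance via the commutation relation and then arguing minimality (the paper phrases this last step as ``$\gamma$ is a factor map from $(C,H)$ to $(\gamma(C),H)$'', while you do the equivalent direct pullback argument). Your version is in fact slightly more thorough, since you explicitly justify why $\sigma$ is well-defined (disjointness of minimal components) and why it is a bijection (apply the same reasoning to $\gamma^{-1}$), points the paper leaves implicit.
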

\begin{proof}
 
Let $\Gamma$ be a subgroup of $G$. Let $C\subseteq X$ be a minimal component of $(X,\alpha|_H, H)$, and let $\gamma\in \aut(X,\alpha, G)$. It is enough to show that if $C$ is a minimal component of $(X, \alpha|_H, H)$ then $\gamma(C)$ is a minimal component as well. If $x\in C$ and $g\in H$ then $g\gamma(x)=\gamma(gx) \in \gamma(C)$, which implies that $\gamma(C)$ is invariant under the action of $H$. This implies that $\gamma$ is a factor map from $(C, \alpha|_{C\times H}, H)$ to the system $(\gamma(C), \alpha|_{\gamma(C)\times H}, H)$, which implies that $\gamma(C)$ is minimal.
\end{proof}

 \begin{lemma}\label{simetria 1 aut Gj} 
Let  $G$ be an infinite residually finite group. Let $(\Gamma_n)_{n\in\mathbb{N}}$ be a scale of $G$, and $X=G_{(\Gamma_n)}$ the associated odometer. Then
 \begin{enumerate}
 \item For every $n\in\mathbb{N}$ and every $f\in \aut(X, \alpha|_{\Gamma_n}, \Gamma_n)$ there exists a unique $\sigma\in \sym(G/\Gamma_n)$ such that $f([a]_n)=[\sigma(a)]_n$, for every $a\in G/\Gamma_n$. 
 
 \item For every $\sigma\in \sym(G/\Gamma_n)$ there exists $f\in \aut(X, \alpha|_{\Gamma_n}, \Gamma_n)$ such that $f([a]_n)=[\sigma(a)]_n$, for every $a\in G/\Gamma_n$.
 \end{enumerate}
Moreover, for every $n\in\mathbb{N}$ we have
$$
\aut(X, \alpha|_{\Gamma_n},\Gamma_n)=\overline{[[\alpha_X]]}_{n}=[[R_{X}]]_{n},
$$
where $\overline{[[\alpha_X]]}_{n}$ is the closure of $[[\alpha_X]]_{n}$ with respect to the uniform convergence  topology in $\mathrm{Homeo}(X)$.
 \end{lemma}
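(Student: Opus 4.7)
The plan is to deduce (1) from the description of the minimal components of $(X,\Gamma_n)_L$ given in Lemma~\ref{min comp} together with Lemma~\ref{simetria}, to establish (2) by an explicit right-translation construction, and then to combine these with Lemma~\ref{min comp 2} and Lemma~\ref{closure-1} to obtain the final identification.

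For (1), Lemma~\ref{min comp} identifies the minimal components of $(X,\Gamma_n)_L$ with the cylinders $\{[a]_n : a\in G/\Gamma_n\}$. Lemma~\ref{simetria} then forces every $f\in\aut(X,\Gamma_n)_L$ to permute these components, producing a unique $\sigma\in\sym(G/\Gamma_n)$; uniqueness is immediate because the $[a]_n$ are pairwise disjoint. For (2), given $\sigma\in\sym(G/\Gamma_n)$, I pick for every $a\in G/\Gamma_n$ a representative $g_a\in G$ of the coset $\sigma(a)^{-1}a\in G/\Gamma_n$ and define $f|_{[a]_n}(x)=x\,\tau(g_a)^{-1}$. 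A direct coordinate computation gives $(x\tau(g_a)^{-1})_n=a(g_a\Gamma_n)^{-1}=\sigma(a)$ for $x\in[a]_n$, so $f([a]_n)=[\sigma(a)]_n$. The map $f$ is a homeomorphism because each piece is a continuous right-translation and the cylinders are clopen. Finally, normality of $\Gamma_n$ yields $\gamma\cdot[a]_n=[a]_n$ for every $\gamma\in\Gamma_n$, and since left and right multiplication on $X$ commute, $f(\gamma x)=\gamma f(x)$, so $f\in\aut(X,\Gamma_n)_L$.

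For the final identity, $\overline{[[G]]}_{n,R}=[[G_{(\Gamma_n)}]]_{n,R}$ is exactly Lemma~\ref{closure-1}, so it suffices to prove $\aut(X,\Gamma_n)_L=[[G_{(\Gamma_n)}]]_{n,R}$. The inclusion $\supseteq$ follows again from the commutation of left and right multiplication together with the $\Gamma_n$-invariance of each $[a]_n$. For $\subseteq$, given $f\in\aut(X,\Gamma_n)_L$, part (1) produces a permutation $\sigma$; part (2) produces an element $f_0\in[[G_{(\Gamma_n)}]]_{n,R}$ realizing the same $\sigma$; and $f_0^{-1}\circ f\in\aut(X,\Gamma_n)_L$ stabilizes each $[a]_n$ setwise. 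Its restriction to $[a]_n$ then belongs to $\aut([a]_n,\Gamma_n)_L$, which by Lemma~\ref{min comp 2} consists exactly of right-multiplications by elements of $[1_X]_n$. Hence $f_0^{-1}\circ f\in[[G_{(\Gamma_n)}]]_{n,R}$, and since this set is a group, $f$ lies there as well.

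The only genuine subtlety is the coset arithmetic in part (2): one has to verify carefully that the right-translation piece on each cylinder produces the prescribed target cylinder and commutes with the left $\Gamma_n$-action, and both facts rely on normality. Beyond this routine bookkeeping, the argument is a direct assembly of the earlier lemmas, so I do not anticipate any serious obstacle.
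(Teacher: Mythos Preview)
Your proof is correct and follows essentially the same route as the paper: part~(1) via Lemmas~\ref{min comp} and~\ref{simetria}, part~(2) via an explicit right-translation realizing a given permutation (your representative $g_a\in\sigma(a)^{-1}a$ amounts to the paper's $g_{\sigma(a)}^{-1}g_a$), and the final equality by composing $f$ with the inverse of the realized permutation and invoking Lemma~\ref{min comp 2} on each cylinder, together with Lemma~\ref{closure-1}. The only cosmetic difference is that the paper composes $f\circ f_\sigma^{-1}$ whereas you compose $f_0^{-1}\circ f$, which is immaterial.
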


 \begin{proof}
 Let $f\in \aut(X, \alpha|_{\Gamma_n}, \Gamma_n)$. By Lemmas \ref{min comp} and \ref{simetria}, there exists $\sigma\in \sym(G/\Gamma_n)$ such that $f([a]_n)=[\sigma(a)]_n$, for every $a\in G/\Gamma_n$.   The uniqueness of $\sigma$ follows from the fact that the collection of cylinder sets of level $n$ is a partition of $X$.

  Let $n\in\mathbb{N}$ and $\sigma\in \sym(G/\Gamma_n)$. Let   $\{g_a: a\in G/\Gamma_n\}$ be a selector from the cosets $g\Gamma_n$, ($g\in G$) selecting $1_G$ from $\Gamma_n$.
  We identify every $g_a$ with the element $(g_a\Gamma_k)_{k\in\mathbb{N}}\in X$.  We define $f_{\sigma}:X\to X$ as 
  $$f_{\sigma}(x)= xg_a^{-1}g_{\sigma(a)},  \mbox{ if } x\in [g_a]_n.$$
  We have $f_{\sigma}([g_a]_n)\subseteq [g_{\sigma(a)}]_n$, for every $a\in G/\Gamma_n$. On the other hand, if $x\in [g_{\sigma(a)}]_n$ then $y=xg_{\sigma(a)}^{-1}g_a\in [g_a]_n$ and $f_{\sigma}(y)=x$, from which we deduce $f_{\sigma}([g_a]_n)=[g_{\sigma(a)}]_n$, for every $a\in G/\Gamma_n$. This implies that $f_{\sigma}$ is a homeomorphism on each cylinder set of level $n$,   and hence  $f_{\sigma}$ is a homeomorphism on $X$. Finally, observe that if $g\in \Gamma_n$ and $x\in [g_a]_n$, then $gx\in [g_a]_n$ which implies that $f_{\sigma}(gx)=gxg_a^{-1}g_{\sigma(a)}=gf_{\sigma}(x)$, which shows that $f_{\sigma}\in \aut(X, \alpha_X|_{\Gamma_n},\Gamma_n)$.

  Let $f\in \aut(X, \alpha_X|_{\Gamma_n}, \Gamma_n)$ and $\sigma\in \sym(G/\Gamma_n)$ be such that $f([a]_n)=[\sigma(a)]_n$, for every $a\in G/\Gamma_n$. We have $f=(f\circ f_{\sigma}^{-1})\circ f_{\sigma}$, with $f_{\sigma}$ as defined above. Observe that $f\circ f_{\sigma}^{-1}|_{[a]_n}\in \aut([a]_n, \alpha_{[a]_n}, \Gamma_n)$, for every $a\in G/\Gamma_n$. Thus, by Lemma \ref{min comp 2} there exists $\{\xi_a: a\in G/\Gamma_n\}\subseteq [1_X]_n$ such that $f\circ f_{\sigma}^{-1}(x)=x\xi_a^{-1}$, for every $x\in [a]_n$ and $a\in G/\Gamma_n$. Then we have
  $$
  f(x)=xg_a^{-1}g_{\sigma(a)}\xi_{\sigma(a)}^{-1},  \mbox{ if } x\in [a]_n,
  $$
  which implies, by Lemma \ref{closure-1}, that 
  $$
  \aut(X, \alpha_X|_{\Gamma_n}, \Gamma_n)\subseteq [[R_{X}]]_{n}=\overline{[[\alpha_X]]}_{n}.
  $$
  The other   inclusion is straightforward.
   \end{proof}

\begin{corollary}\label{isomorphism}
Let  $G$ be an infinite residually finite group. Let $(\Gamma_n)_{n\in\mathbb{N}}$ be a scale of $G$, and $X=G_{(\Gamma_n)}$ the associated odometer. Then
          $$[[R_X]]_{n}=\aut(X, \alpha_X|_{\Gamma_n}, \Gamma_n)\cong [1_G]_n^{G/\Gamma_n} \rtimes \sym (G/\Gamma_n), \mbox{ for every } n\in\mathbb{N}.$$
\end{corollary}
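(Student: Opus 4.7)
The equality $[[G_{(\Gamma_n)}]]_{n,R}=\aut(X,\Gamma_n)_L$ is already contained in Lemma \ref{simetria 1 aut Gj}, so the remaining task is to exhibit the isomorphism with $X_n^{G/\Gamma_n}\rtimes\sym(G/\Gamma_n)$. The plan is to realize $\aut(X,\Gamma_n)_L$ as a split extension whose kernel is, in a suitably chosen parametrization, acted on by $\sym(G/\Gamma_n)$ via coordinate permutation.

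First I would use Lemma \ref{simetria 1 aut Gj}(1)--(2) to define a surjective homomorphism $\pi:\aut(X,\Gamma_n)_L\to\sym(G/\Gamma_n)$, sending $f$ to the unique permutation $\sigma_f$ such that $f([a]_n)=[\sigma_f(a)]_n$. The kernel $K:=\ker\pi$ consists of the $f$ that preserve every cylinder of level $n$, and therefore restricts on each $[a]_n$ to an element of $\aut([a]_n,\Gamma_n)_L$. Lemma \ref{min comp 2} then provides an isomorphism $K\cong X_n^{G/\Gamma_n}$ via the tuple $(\xi_a)_{a\in G/\Gamma_n}$ determined by $f|_{[a]_n}=f^R_{\xi_a}|_{[a]_n}$.

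Next, I would fix representatives $\{g_a:a\in G/\Gamma_n\}\subseteq G$ with $g_{\Gamma_n}=1_G$ and take as a section the map $s(\sigma)=f_\sigma$ from the proof of Lemma \ref{simetria 1 aut Gj}(2), where $f_\sigma|_{[g_a]_n}(x)=xg_a^{-1}g_{\sigma(a)}$. A short computation shows that $f_{\sigma_1}\circ f_{\sigma_2}=f_{\sigma_1\sigma_2}$, because the middle factors $g_{\sigma_2(a)}g_{\sigma_2(a)}^{-1}$ telescope, so $s$ is a homomorphism splitting $\pi$.

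The main obstacle is identifying the conjugation action of $s(\sigma)$ on $K$ with the standard permutation action on $X_n^{G/\Gamma_n}$. Using the coordinates coming from Lemma \ref{min comp 2}, a direct calculation gives $(s(\sigma)\circ\varphi\circ s(\sigma)^{-1})|_{[\sigma(a)]_n}(y)=y\bigl(g_{\sigma(a)}^{-1}g_a\xi_a g_a^{-1}g_{\sigma(a)}\bigr)^{-1}$, which is a permutation of coordinates twisted by an inner conjugation in $X$. To eliminate the twist, I would reparametrize the kernel by $\widetilde{\xi}_a:=g_a\xi_ag_a^{-1}$; this still lies in $X_n$ because $X_n$ is normal in $X$ (it is the kernel of the projection $X\to G/\Gamma_n$, so normality is inherited from that of $\Gamma_n$ in $G$). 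Under the identification $K\cong X_n^{G/\Gamma_n}$ given by $f\mapsto(\widetilde{\xi}_a)_a$, the inner twist is absorbed and the conjugation action reduces to $\widetilde{\xi}'_{\sigma(a)}=\widetilde{\xi}_a$, which is the coordinate permutation. This yields the claimed isomorphism $\aut(X,\Gamma_n)_L\cong X_n^{G/\Gamma_n}\rtimes\sym(G/\Gamma_n)$, with the normality of $X_n$ in $X$ being the decisive point that allows the coordinate change to straighten the action.
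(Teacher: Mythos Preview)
Your argument is correct and follows the same strategy as the paper: exhibit $\aut(X,\Gamma_n)_L$ as a split extension of $\sym(G/\Gamma_n)$ by $X_n^{G/\Gamma_n}$, using the permutation map $\pi$ coming from Lemma~\ref{simetria 1 aut Gj} and the identification of the kernel coming from Lemma~\ref{min comp 2}. Your version is in fact more complete than the paper's, which merely writes down $\pi$ and the injection $\psi$ and asserts the split exact sequence; you additionally verify that the section $\sigma\mapsto f_\sigma$ is a homomorphism and, via the reparametrization $\widetilde{\xi}_a=g_a\xi_a g_a^{-1}$ (using normality of $X_n$ in $X$), that the resulting conjugation action is the standard coordinate permutation---points the paper leaves entirely implicit.
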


\begin{proof}
Let $n\in \mathbb{N}$. It is enough to show there is a split exact sequence 
     
    $$1\longrightarrow [1_G]_n^{G/\Gamma_n}\xlongrightarrow{\psi} \aut(X, \alpha_X|_{\Gamma_n},\Gamma_n)\xlongrightarrow{\pi} \sym(G/\Gamma_n) \longrightarrow 1.$$
We define $\pi: \aut(X, \alpha_X|_{\Gamma_n}, \Gamma_n)\to \sym(G/\Gamma_n)$ as $\pi(\gamma)=\sigma$, where  $\sigma\in \sym(G/\Gamma_n)$ is the permutation associated to $\gamma\in \aut(X, \alpha_X|_{\Gamma_n},\Gamma_n)$ given in Lemma \ref{simetria 1 aut Gj}. By Lemma \ref{simetria 1 aut Gj} the map $\pi$ is surjective. Moreover, $\pi$ is a group homomorphism because if $\gamma_1,\gamma_2\in \aut(X, \alpha_X|_{\Gamma_n},\Gamma_n)$, then   
    $$\gamma_1\circ\gamma_2([a]_n)=\gamma_1([\pi(\gamma_2)(a)]_n)=[\pi(\gamma_1)\circ\pi(\gamma_2)(a)]_n,$$
     which means that $\pi(\gamma_1\circ\gamma_2)=\pi(\gamma_1)\circ\pi(\gamma_2).$

The map $\psi: [1_G]_n^{G/\Gamma_n}\to \aut(X, \alpha_X|_{\Gamma_n},\Gamma_n)$ is given by $\psi(\gamma_i: i\in G/\Gamma_n)=f$, where $f(x)=x\gamma_i^{-1}$ if $x\in [i]_n$, for every $x\in X$ and $i\in G/\Gamma_n$. Here we identify every $\gamma_i$ with the respective element in the subgroup $[1_G]_n$ of $X$. The map $\psi$ is a well defined injective homomorphism.   
Note that $\mathrm{Im}(\psi)$ consists precisely of those elements of 
$\aut(X,\alpha_X|_{\Gamma_n},\Gamma_n)$ that fix all cylinder sets of level $n$, that is, 
those whose associated permutation $\sigma \in \sym(G/\Gamma_n)$ is the identity. 
Consequently, $\mathrm{Im}(\psi)=\Ker(\pi)$.  Define a map $s \colon \sym(G/\Gamma_n) \to \aut(X,\alpha_X|_{\Gamma_n},\Gamma_n)$ by 
$s(\sigma)=f_\sigma$, where $f_\sigma$ is the map constructed in the proof of part~(2) of 
Lemma~\ref{simetria 1 aut Gj} for each $\sigma \in \sym(G/\Gamma_n)$. 
The map $s$ is a group homomorphism, and by part~(1) of Lemma~\ref{simetria 1 aut Gj} we have 
$\pi \circ s = \mathrm{id}$. Hence, the exact sequence splits.
\end{proof}

  Remember that $I(G)$ denotes the collection of finite index subgroups of $G$.

\begin{Lema}\label{Lema res escala}
Let  $G$ be an infinite residually finite group. Let $(\Gamma_n)_{n\in\mathbb{N}}$ be a scale of $G$, and $X=G_{(\Gamma_n)}$ the associated odometer. Then for every $H\in I(G)$, there exists $n\in \N$ such that $\aut(X, \alpha_X|_{H}, H)\sub \aut(X, \alpha_X|_{\Gamma_n}, \Gamma_n)$. Furthermore,  $\aut(X,\alpha_X|_H,  H)=\aut(X, \alpha_X|_{\langle H, \Gamma_n\rangle},  \langle H, \Gamma_n\rangle)$, where $\langle H,\Gamma_n\rangle$ is the subgroup of $G$ generated by $H$ and $\Gamma_n$.
\end{Lema}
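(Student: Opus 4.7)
The plan is to reduce the question to showing that any $f\in\aut(X,H)_L$ automatically commutes with left multiplication by every element of the closure $\overline{H}$ of $H$ in $X$; once this is in hand, the first conclusion follows because $\overline{H}$ will turn out to be an open subgroup of $X$ and therefore will contain some cylinder subgroup $[1_X]_n\supseteq\Gamma_n$.

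First I would check that $\overline{H}$ is open in the compact group $X$. Since $H\subseteq\overline{H}$, the assignment $gH\mapsto g\overline{H}$ yields a well-defined surjection from the finite set $G/H$ onto the image of $G$ in $X/\overline{H}$. This image is dense, because $G$ is dense in $X$ and the projection $X\to X/\overline{H}$ is continuous, and it is finite, so it must coincide with $X/\overline{H}$. Hence $[X:\overline{H}]\leq [G:H]<\infty$, and any finite-index closed subgroup of a compact group is open. Consequently there exists $n\in\mathbb{N}$ with $[1_X]_n\subseteq\overline{H}$, and in particular $\Gamma_n\subseteq\overline{H}$.

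Next, fix $f\in\aut(X,H)_L$ and consider
\[
S_f \;=\; \{\xi\in X : f(\xi x)=\xi f(x),\ \forall\,x\in X\}.
\]
A direct verification shows that $S_f$ is a subgroup of $X$ which contains $H$ by hypothesis. The crux of the argument is that $S_f$ is closed in $X$: if $\xi_k\to\xi$ with each $\xi_k\in S_f$, then joint continuity of multiplication in the topological group $X$, together with continuity of $f$, lets the identity $f(\xi_k x)=\xi_k f(x)$ pass to the limit for every $x\in X$. Therefore $\overline{H}\subseteq S_f$, so in particular $\Gamma_n\subseteq S_f$; this is exactly the inclusion $\aut(X,H)_L\subseteq \aut(X,\Gamma_n)_L$.

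For the \emph{furthermore} statement, write $\langle H,\Gamma_n\rangle=H\Gamma_n$, using that $\Gamma_n$ is normal in $G$. The inclusion $\aut(X,H\Gamma_n)_L\subseteq\aut(X,H)_L$ is trivial, and the reverse follows at once from the previous paragraph: if $f$ commutes with both the $H$- and the $\Gamma_n$-actions, then for $h\in H$ and $\gamma\in\Gamma_n$ we have $f(h\gamma x)=hf(\gamma x)=h\gamma f(x)$. I do not expect a serious obstacle here: the real content is the passage from a dense subgroup to its closure via the closedness of $S_f$, and the only preliminary fact that requires justification is the openness of $\overline{H}$ in $X$, which is a standard consequence of finite index in a compact group.
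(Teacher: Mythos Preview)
Your proof is correct but follows a genuinely different line from the paper's. The paper first argues, by a pigeonhole on indices, that the sequence $\langle H,\Gamma_k\rangle$ eventually stabilizes at some level $n$, and then runs an explicit $\varepsilon$--$\delta$ argument: given $\phi\in\aut(X,H)_L$ and $g\in\langle H,\Gamma_n\rangle=\langle H,\Gamma_j\rangle$, it writes $g=g'h$ with $g'\in\Gamma_j$ for $j$ large, and uses uniform continuity of $\phi$ together with the fact that $g'$ moves points by at most $2^{-j}$ to force $\phi(gx)=g\phi(x)$. You instead work in the profinite group $X$ itself: you show $\overline{H}$ is open (via finite index), and observe that the ``commuting set'' $S_f=\{\xi\in X:f(\xi x)=\xi f(x)\ \forall x\}$ is a closed subgroup containing $H$, hence contains $\overline{H}\supseteq[1_X]_n\supseteq\Gamma_n$. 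Your route is more conceptual and isolates the mechanism (passage from a dense subgroup to its closure by continuity) in a single stroke; the paper's route is more elementary in that it never invokes left multiplication by arbitrary elements of $X$ and stays entirely within the countable group $G$, at the cost of a hands-on estimate. Both yield the same $n$ in the end, since the stabilized group $\langle H,\Gamma_n\rangle$ of the paper is exactly $G\cap\overline{H}$.
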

\begin{proof}
    Let $H\in I(G)$. Observe that  for every $k\in \N$,  we have $H\sub\langle H,\Gamma_{k+1}\rangle\sub \langle H,\Gamma_{k}\rangle$, and then
    $$[G:\langle H,\Gamma_k\rangle]\leq [G:\langle H,\Gamma_{k+1}\rangle]\leq [G:H].$$
    Thus  $\{[G:\langle H,\Gamma_k\rangle]\}_{k\in\N}$  is an increasing sequence of positive integer numbers bounded above by $[G:H]$.   This implies there exist $n,m\in \N$ such that $m=[G:\langle H,\Gamma_k\rangle]$ for every $k\geq n$. On the other hand, for $k\geq n$ we have $\langle H,\Gamma_k\rangle$ is a subgroup of $\langle H,\Gamma_n\rangle$, and then
    $$[\langle H,\Gamma_n\rangle:\langle H,\Gamma_k\rangle]=\frac{[G:\langle H,\Gamma_k\rangle]}{[G:\langle H,\Gamma_n\rangle]}=\frac{m}{m}=1,$$
    from which we deduce $\langle H,\Gamma_k\rangle=\langle H,\Gamma_n\rangle$.

      Let $\phi\in \aut(X, \alpha_X|_{H}, H)$, $g\in \langle H,\Gamma_n\rangle$ and $\varepsilon>0$ . Let $\delta>0$ be such that  for every $x,y\in X,$ such that $d(x,y)<\delta,$  we have $d(\phi(x),\phi(y))<\varepsilon/2$. Let $j\geq n$ be such that $2^{-j}<\min\{\varepsilon/2,\delta\}$. Since $g\in \langle H,\Gamma_n\rangle=\langle H,\Gamma_j\rangle$ and $\Gamma_j$ is normal, there exist $h\in H,g'\in \Gamma_j$ such that $g=g'h$. Since    $g'\in \Gamma_j$, for every $x\in X$ we have that $g'x$ and $x$ are in the same cylinder set of level $j$.  Thus  
    \begin{equation}\label{eq-1}
    d(x,g'x)\leq \min\{\varepsilon/2,\delta\} \mbox{ for every } x\in X.
    \end{equation}
Then for every $x\in X$  
    $$d(g'h x,h x)=d(g'\cdot(h x),h x)\leq \delta,$$
    from which we get
    $$d(\phi(g'h x), \phi(h x))\leq\varepsilon/2.$$
   Furthermore, by (\ref{eq-1}) we have
    $$d(\phi(h\cdot x),g'\cdot \phi(h\cdot x))\leq \varepsilon/2.$$ In this way, using the last two inequalities, we obtain
    \begin{align*}
        d(\phi(g\cdot x),g\cdot \phi(x))&= d(\phi(g'h\cdot x),g'h\cdot \phi(x))\\
        &\leq d(\phi(g'h\cdot x), \phi(h\cdot x))+d(\phi(h\cdot x),g'h\cdot \phi(x))\\
        &\leq \varepsilon/2+d(\phi(h\cdot x),g'\cdot \phi(h\cdot x))\\
        &\leq \varepsilon/2+\varepsilon/2=\varepsilon.
    \end{align*}
    Since $\varepsilon>0$ is arbitrary, we conclude  that  $\phi(g\cdot x)=g\cdot \phi(x)$ for every $x\in X,g\in\langle H,\Gamma_n\rangle$.  In other words,  $\phi\in \aut(X, \alpha_X|_{\langle H,\Gamma_n\rangle}, \langle H,\Gamma_n\rangle)\sub \aut(X, \alpha_X|_{\Gamma_n},\Gamma_n)$. This shows that  $$\aut(X, \alpha_X|_{H}, H)\sub \aut(X, \alpha_X|_{\langle H,\Gamma_n\rangle}, \langle H,\Gamma_n\rangle)\sub   \aut(X, \alpha_X|_{\Gamma_n}, \Gamma_n).$$

  The other inclusion is trivial since $H \subseteq \langle H,\Gamma_n \rangle$, which completes the proof.
\end{proof}

Theorem \ref{main-characterization} is a consequence of the next result. 
\begin{proposition}\label{teo res escala}\label{like-full-group}
   Let  $G$ be an infinite residually finite group. Let $(\Gamma_n)_{n\in\mathbb{N}}$ be a scale of $G$, and $X=G_{(\Gamma_n)}$ the associated odometer. Then
    $$\aut^\infty(X,\alpha_X, G)=\bigcup_{n\in\N} \aut(X, \alpha_X|_{\Gamma_n}, \Gamma_n)=\bigcup_{n\in\mathbb{N}}\overline{[[\alpha_X]]}_{n} =[[R_X]],$$
    where the closure is taken with respect to the uniform convergence topology in $\mathrm{Homeo}(X)$.  
\end{proposition}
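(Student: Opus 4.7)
The plan is to prove the chain of three equalities by assembling the preceding lemmas; no new technical work is required beyond bookkeeping. The argument breaks naturally into three steps, one per equality.

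First, I would establish $\aut^\infty(X,G)_L = \bigcup_{n \in \mathbb{N}} \aut(X,\Gamma_n)_L$ via Lemma \ref{Lema res escala}. The inclusion $\supseteq$ is immediate since each $\Gamma_n$ belongs to $I(G)$. For $\subseteq$, given any $H \in I(G)$, the lemma produces some $n \in \mathbb{N}$ with $\aut(X,H)_L \subseteq \aut(X,\Gamma_n)_L$, so every element of $\aut^\infty(X,G)_L$ already lies in the countable union on the right.

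Next, the middle equality $\bigcup_n \aut(X,\Gamma_n)_L = \bigcup_n \overline{[[G]]}_{n,R}$ follows by applying Lemma \ref{simetria 1 aut Gj} term by term, since that lemma gives $\aut(X,\Gamma_n)_L = \overline{[[G]]}_{n,R}$ for each $n$. For the last equality $\bigcup_n \overline{[[G]]}_{n,R} = [[G_{(\Gamma_n)}]]_R$, I would use Lemma \ref{closure-1} to rewrite each $\overline{[[G]]}_{n,R}$ as $[[G_{(\Gamma_n)}]]_{n,R}$, and then invoke the defining identity $[[G_{(\Gamma_n)}]]_R = \bigcup_{m \in \mathbb{N}} [[G_{(\Gamma_n)}]]_{m,R}$ recorded at the beginning of Section~3.

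The real content of the proposition is already packaged inside Lemmas \ref{Lema res escala}, \ref{simetria 1 aut Gj}, and \ref{closure-1}, so the remaining proof is essentially the bookkeeping that strings them together. There is no substantial obstacle; the only subtlety worth articulating is the cofinality observation that makes everything work: the countable sequence $(\Gamma_n)_{n \in \mathbb{N}}$ is cofinal in $I(G)$ with respect to the $\aut$-inclusion ordering, by Lemma \ref{Lema res escala}, which is precisely what allows the leftmost union indexed by $I(G)$ to collapse to a union indexed by $\mathbb{N}$.
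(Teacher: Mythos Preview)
Your proposal is correct and follows essentially the same approach as the paper's proof: the paper simply invokes Lemma~\ref{Lema res escala} for the first equality and Lemma~\ref{simetria 1 aut Gj} (which already contains the identification $\overline{[[G]]}_{n,R}=[[G_{(\Gamma_n)}]]_{n,R}$ from Lemma~\ref{closure-1}) for the rest. Your write-up just unpacks these citations a bit more explicitly.
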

\begin{proof}
Lemma \ref{Lema res escala} implies that $\aut^\infty(X, \alpha_X, G)=\bigcup_{n\in\N} \aut(X, \alpha_X|_{\Gamma_n}, \Gamma_n)$. Then, from  Lemma  \ref{simetria 1 aut Gj} it follows that $\aut^\infty(X, \alpha_X,G)=\bigcup_{n\in\mathbb{N}}\overline{[[\alpha_X]]}_{n}=\bigcup_{n\in\mathbb{N}}[[R_{X}]]_{n}=[[R_X]]$.
\end{proof}




\begin{proof}[Proof of Theorem \ref{main-1}]
   This is a consequence of Theorem \ref{main_full_group} and Proposition \ref{like-full-group}.
\end{proof}



The next result is an extension of \cite[Theorem 3.5]{JB24}  and is direct from Lemma \ref{isomorphism} and Proposition \ref{teo res escala}.

\begin{corollary}
 Let  $G$ be an infinite residually finite group. Let $(\Gamma_n)_{n\in\mathbb{N}}$ be a scale of $G$, and $X=G_{(\Gamma_n)}$ the associated odometer. Then $\aut^\infty(X, \alpha_X, G)$ is isomorphic to the direct limit
$$
X \xrightarrow{\phi_1} [1_G]_1^{G/\Gamma_1}\rtimes \sym(G/\Gamma_1) \xrightarrow{\phi_2} [1_G]_2^{G/\Gamma_2}\rtimes \sym(G/\Gamma_2) \xrightarrow{\phi_3} \cdots 
$$
 $$=\lim_{\longrightarrow}\left([1_G]_n^{G/\Gamma_n}\rtimes \sym (G/\Gamma_n), \phi_n\right),$$
    where  $\phi_n$ is an injective homomorphism, for every $n\in\mathbb{N}$.
\end{corollary}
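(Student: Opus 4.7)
The plan is to assemble the three preceding facts — Lemma~\ref{characterization_automorphism}, Corollary~\ref{isomorphism}, and Proposition~\ref{teo res escala} — into a direct-limit description of $\aut^\infty(X,G)_L$. The content of the statement is not a new dynamical fact: it is a repackaging of the already-known nested union $\aut^\infty(X,G)_L = \bigcup_{n \in \mathbb{N}} \aut(X,\Gamma_n)_L$ as a concrete direct limit of finite wreath-type groups (with $X_0=X$ encoding the $G$-equivariant automorphisms themselves).

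First I would invoke Proposition~\ref{teo res escala} to write
\[
\aut^\infty(X,G)_L \;=\; \aut(X,G)_L \;\cup\; \bigcup_{n \in \mathbb{N}} \aut(X,\Gamma_n)_L,
\]
noting that the chain $\Gamma_1 \supseteq \Gamma_2 \supseteq \cdots$ yields a nested sequence of subgroups $\aut(X,G)_L \subseteq \aut(X,\Gamma_1)_L \subseteq \aut(X,\Gamma_2)_L \subseteq \cdots$ of $\mathrm{Homeo}(X)$ (since $\aut(X,\Gamma_{n+1})$ contains $\aut(X,\Gamma_n)$). The union of such a nested chain is, by a standard categorical argument, canonically isomorphic to the direct limit of the system with the inclusion maps as transition morphisms.

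Next I would identify each term of the chain with the algebraic model indicated in the statement. By Lemma~\ref{characterization_automorphism}, $\aut(X,G)_L \cong X = X_0$. By Corollary~\ref{isomorphism}, for every $n \in \mathbb{N}$ we have isomorphisms
\[
\Psi_n : \aut(X,\Gamma_n)_L \;\longrightarrow\; X_n^{G/\Gamma_n} \rtimes \sym(G/\Gamma_n).
\]
Transporting the inclusions $\iota_n : \aut(X,\Gamma_{n-1})_L \hookrightarrow \aut(X,\Gamma_n)_L$ through these isomorphisms (setting $\aut(X,\Gamma_0)_L := \aut(X,G)_L$) produces injective homomorphisms
\[
\phi_n \;=\; \Psi_n \circ \iota_n \circ \Psi_{n-1}^{-1},
\]
which are the maps appearing in the statement. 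Since direct limits are invariant under replacing a system by an isomorphic one, the resulting direct limit is isomorphic to $\bigcup_n \aut(X,\Gamma_n)_L = \aut^\infty(X,G)_L$.

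The only delicate point — and the one I would highlight rather than belabor — is checking that the $\phi_n$ so obtained are genuinely injective group homomorphisms. Injectivity is automatic from the injectivity of $\iota_n$, and the homomorphism property is automatic from that of $\iota_n$ and the $\Psi_n$; so no new computation is needed. I would not attempt to give an explicit combinatorial formula for $\phi_n$ in terms of refining cosets of $\Gamma_{n-1}$ into cosets of $\Gamma_n$, since the statement only demands that such an injective $\phi_n$ exists. Hence the whole argument reduces to citing Lemma~\ref{characterization_automorphism}, Corollary~\ref{isomorphism}, and Proposition~\ref{teo res escala}, and observing that an increasing union of groups is the direct limit of the inclusion system.
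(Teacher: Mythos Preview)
Your proposal is correct and matches the paper's own approach: the paper does not give a detailed proof but simply declares the corollary ``direct from Lemma~\ref{isomorphism} and Proposition~\ref{teo res escala}'', which is exactly the assembly you carry out (with Lemma~\ref{characterization_automorphism} added to handle the base term $X_0=X$). Your explicit description of the transition maps $\phi_n = \Psi_n \circ \iota_n \circ \Psi_{n-1}^{-1}$ and the observation that a nested union is canonically its own direct limit are precisely the unpacking of what the paper leaves implicit.
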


As a consequence, we obtain the following result, which shows that  the stabilized automorphism group preserves the amenability of the odometer.

\begin{corollary}\label{amenability}
 Let  $G$ be an infinite residually finite group. Let $(\Gamma_n)_{n\in\mathbb{N}}$ be a scale of $G$, and $X=G_{(\Gamma_n)}$ the associated free exact odometer. The following are equivalent:
 \begin{enumerate}
  \item $\aut^\infty(X, \alpha_X, G)$  is amenable. 
  \item $X$ is amenable.
  \item $[[L_X]]$ is amenable.
  \item $[[R_X]]$  is amenable.
  \end{enumerate}
\end{corollary}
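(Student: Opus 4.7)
The plan is to read off all three equivalences from the structural results already established in this section, appealing only to standard permanence properties of amenability for discrete groups (closure under subgroups, finite direct products, semidirect products, and directed unions). The equivalence of (1) and (3) is essentially automatic: Proposition~\ref{like-full-group} identifies $\aut^\infty(X,G)_L$ with $[[G_{(\Gamma_n)}]]_R$, and the explicit isomorphism $\alpha$ between $[[G_{(\Gamma_n)}]]_L$ and $[[G_{(\Gamma_n)}]]_R$ described at the beginning of Section~3 shows that the amenability of either full group is the same condition.

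For the implication $(1)\Rightarrow(2)$, I would observe that $\aut(X,G)_L \subseteq \aut^\infty(X,G)_L$ (by the definition of the stabilized automorphism group, taking $H = G \in I(G)$), and then invoke Lemma~\ref{characterization_automorphism}, which gives an isomorphism of abstract groups $\aut(X,G)_L \cong X$ via $x \mapsto f^R_x$. Since amenability passes to subgroups, the amenability of $\aut^\infty(X,G)_L$ forces the amenability of $X$.

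For $(2)\Rightarrow(1)$, I would use the direct limit description
\[
\aut^\infty(X,G)_L \;\cong\; \lim_{\longrightarrow}\bigl(X_n^{G/\Gamma_n} \rtimes \sym(G/\Gamma_n),\; \phi_n\bigr)
\]
from the corollary immediately preceding this statement. The clopen subgroup $X_n = [1_G]_n$ of $X$ inherits amenability from $X$; since $[G:\Gamma_n]<\infty$, both the finite direct power $X_n^{G/\Gamma_n}$ and the finite symmetric group $\sym(G/\Gamma_n)$ are amenable, and hence so is their semidirect product. A directed union of amenable groups is amenable (every finitely generated subgroup lies in one of the terms), so the direct limit is amenable, yielding~(1). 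No genuine obstacle arises once these three inputs — Proposition~\ref{like-full-group}, Lemma~\ref{characterization_automorphism}, and the direct limit decomposition — are in place; the only subtlety worth flagging is that ``amenable'' here is interpreted in the discrete sense, which is the category in which the direct limit decomposition was taken.
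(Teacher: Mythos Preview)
Your proposal is correct and follows essentially the same route as the paper: the paper also deduces $(1)\Rightarrow(2)$ from the fact that $X$ sits inside $\aut^\infty(X,G)_L$ as a subgroup, and $(2)\Rightarrow(1)$ from the direct limit decomposition together with the closure of amenability under subgroups, semidirect products, and direct limits. Your treatment of the equivalence with (3) via Proposition~\ref{like-full-group} and the isomorphism $\alpha$ is exactly what the paper leaves implicit.
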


\begin{proof}
If $X$ is non-amenable then $\aut^\infty(X, \alpha_X, G)$ is not either, because  $\aut^{\infty}(X, \alpha_X, G)$ contains a subgroup isomorphic to $X$. 

If $X$ is amenable,  then each $[1_G]_n$ is amenable, because it is a subgroup of $X$. Since amenability is closed under semi-direct product and direct limit, we conclude that $\aut^{\infty}(X, \alpha_X, G)$ is amenable.
\end{proof}

 \begin{remark}\label{non-amenability}
{\rm From Corollary \ref{non-COE-1} and Proposition \ref{like-full-group} we deduce that the stabilized automorphism group of an odometer does not convey much information about the acting group itself. It is possible to have $G$ amenable and $H$ non-amenable and $G$ and $H$-odometers with isomorphic stabilized groups. 
}
\end{remark}

\section{Invariance properties of the stabilized automorphism group}\label{sec:invariance}

The results from the previous sections suggest a connection between orbit equivalence properties and the stabilized automorphism group. In this section, we investigate this relationship in more depth.


\begin{corollary}\label{sufficient-finitely-generated}
    Let $G$ and $H$ be infinite finitely generated residually finite groups. Let   $X$ and $Y$ be free exact odometers of $G$ and $H$, respectively. If $(X, \alpha_X,G)$ and $(Y, \alpha_Y, H)$ are continuously orbit equivalent, then $$\aut^{\infty}(X, \alpha_X, G)\cong\aut^{\infty}(Y, \alpha_Y, H).$$ The converse is not true. 
\end{corollary}

\begin{proof}
  Corollary~\ref{COE-1} together with Proposition~\ref{like-full-group} shows that if $(X,\alpha_X,G)$ and $(Y,\alpha_Y,H)$ are continuously orbit equivalent, then $\aut^{\infty}(X,\alpha_X,G)\cong \aut^{\infty}(Y,\alpha_Y,H)$. On the other side, Corollary~\ref{non-COE-1}, Remark~\ref{Remark_1}, and Proposition~\ref{like-full-group} imply that there exist free exact odometers which are not continuously orbit equivalent but nevertheless have isomorphic stabilized automorphism groups.
\end{proof}

  \begin{corollary}\label{necessary_1}
Let $G$ and $H$ be infinite finitely generated residually finite groups. Let $(\Gamma_n)_{n\in\mathbb{N}}$ and $(\Lambda_n)_{n\in\mathbb{N}}$ be scales of $G$ and $H$, respectively.   Let $X=G_{(\Gamma_n)}$ and $Y=H_{(\Lambda_n)}$. If $\aut^{\infty}(X, \alpha_X, G)\cong \aut^{\infty}(Y, \alpha_Y, H)$, then 
$$
    \left\langle\left\{\frac{1}{[H:\Lambda_n]}: n\in\mathbb{N}\right\}  \right\rangle=\left\langle\left\{\frac{1}{[G:\Gamma_n]}: n\in\mathbb{N}\right\}  \right\rangle.
    $$
 \end{corollary}
\begin{proof}
By Theorem \ref{main-1}, if $\aut^{\infty}(X, \alpha_X, G)\cong \aut^{\infty}(Y, \alpha_Y, H)$ then there exist clopen subgroups $U_1\subseteq X$, $U_2\subseteq Y$ such that $[X:U_1]=[Y:U_2]$ and a topological isomorphism $\phi:U_1\to U_2$.   By uniqueness of the Haar measure, it follows    that for every clopen subgroup $C\subseteq U_1$ we have
$$\mu_1(C)=\frac{1}{[X:U_1][U_1:C]}=\frac{1}{[Y:U_2][U_2:\phi(C)]}=\mu_2(\phi(C)),$$ 
where $\mu_1$ and $\mu_2$ are the respective Haar measures. This implies that
\begin{eqnarray*}
 \left\langle\left\{\frac{1}{[H:\Lambda_n]}: n\in\mathbb{N}\right\}  \right\rangle & = &\langle\{\mu_2(\phi(C)): C \mbox{ clopen subgroup of } U_1\}\rangle\\
     & = &\langle\{\mu_1(C): C \mbox{ clopen subgroup of } U_1\}\rangle\\
     &=& \left\langle\left\{\frac{1}{[G:\Gamma_n]}: n\in\mathbb{N}\right\}\right\rangle
\end{eqnarray*}
\end{proof}

  If $(X,\alpha,G)$ is a uniquely ergodic minimal Cantor system, then its reduced dimension group $(D_m(\alpha), D_m(\alpha)^+,[1] )$ is isomorphic to $(K, K^+,1)$, where $K$ is the subgroup of $\mathbb{R}$ generated by $\{\mu(C): C\subseteq X \mbox{ clopen }\}$, and $\mu$ is its unique invariant measure (see \cite{GMPS10} for details). If $X$ is an odometer associated with the scale $(\Gamma_n)_{n\in\mathbb{N}}$ of $G$, and $\alpha=\alpha_X$, then $K$ corresponds to the subgroup  generated by $\left\{\frac{1}{[G:\Gamma_n]}: n\in\mathbb{N}\right\}$. Consequently, the characterization of orbit equivalence for $\mathbb{Z}^d$-actions given in \cite[Theorem 2.5]{GMPS10}, together with Corollaries \ref{sufficient-finitely-generated} and \ref{necessary_1}, yields the following result.

 \begin{corollary}\label{OE}
  Let $d_1, d_2\in \mathbb{N}$. Let   $X_1$ and $X_2$ be free odometers of   $\mathbb{Z}^{d_1}$ and $\mathbb{Z}^{d_2}$, respectively.  Consider the following sentences: 
  \begin{enumerate}
   \item   $(X_1, \alpha_{X_1}, \mathbb{Z}^{d_1})$  and $(X_2, \alpha_{X_2},\mathbb{Z}^{d_2})$ are continuously orbit equivalent.
    
   \item $\aut^{\infty}(X_1, \alpha_{X_1}, \mathbb{Z}^{d_1})$  and  $\aut^{\infty}(X_2, \alpha_{X_2},\mathbb{Z}^{d_2})$ are isomorphic.
    
   \item $(X_1, \alpha_{X_1}, \mathbb{Z}^{d_1})$  and $(X_2, \alpha_{X_2},\mathbb{Z}^{d_2})$ are  orbit equivalent.
  \end{enumerate}
  We have a chain of implications: condition (1) implies condition (2) that implies condition (3).
   \end{corollary}

Combining Corollary~\ref{OE} and the fact that continuous orbit equivalence, orbit equi\-va\-lence, and conjugacy  all coincide within the class of \( \mathbb{Z} \)-odometers, we recover one of the results stated in \cite[Corollary~5.2]{BJ24} and \cite{JB24}.

\begin{corollary}\label{E}
 Two $\mathbb{Z}$-odometers have isomorphic stabilized groups if and only if they are conjugate.   
\end{corollary}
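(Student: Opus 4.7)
The plan is to specialize Corollary~\ref{OE} to the case $d_1 = d_2 = 1$ and combine it with the classical rigidity fact that, within the class of $\mathbb{Z}$-odometers, continuous orbit equivalence, orbit equivalence, and topological conjugacy all coincide.

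First I would invoke Corollary~\ref{OE} with $d_1 = d_2 = 1$. Applied to two $\mathbb{Z}$-odometers $X_1$ and $X_2$ it yields the chain
\begin{equation*}
\text{conjugate}\;\Longrightarrow\;\text{COE}\;\Longrightarrow\;\aut^{\infty}(X_1,\mathbb{Z})_L\cong\aut^{\infty}(X_2,\mathbb{Z})_L\;\Longrightarrow\;\text{OE},
\end{equation*}
where the first implication is trivial and the remaining two are the content of Corollary~\ref{OE}. To close the loop it suffices to show that orbit equivalence of two $\mathbb{Z}$-odometers forces conjugacy.

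For the final implication I would appeal to the standard classification of $\mathbb{Z}$-odometers: up to conjugacy, a $\mathbb{Z}$-odometer is uniquely determined by its supernatural number, equivalently by the subgroup $\langle\{1/[\mathbb{Z}:G_n] : n\in\mathbb{N}\}\rangle$ of $\mathbb{Q}$. This group coincides with the group of rational eigenvalues of the system (read off, for instance, from the associated dimension group in the Giordano--Putnam--Skau framework), and it is therefore preserved under orbit equivalence. Consequently, two orbit equivalent $\mathbb{Z}$-odometers share the same supernatural number, and hence are conjugate.

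There is no substantial obstacle here: the statement is an immediate specialization of Corollary~\ref{OE} to rank one, combined with a classical coincidence of equivalence relations on $\mathbb{Z}$-odometers. The only point worth being careful about is correctly citing (or briefly justifying) the fact that OE implies conjugacy for $\mathbb{Z}$-odometers, which is why the paper flags the connection to \cite[Corollary~5.2]{BJ24} and \cite{JB24}.
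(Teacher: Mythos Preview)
Your proposal is correct and follows essentially the same approach as the paper: specialize Corollary~\ref{OE} to $d_1=d_2=1$ and then close the loop using the classical fact that for $\mathbb{Z}$-odometers continuous orbit equivalence, orbit equivalence, and conjugacy coincide. The paper simply states this coincidence as known, whereas you spell out the OE $\Rightarrow$ conjugacy step via the supernatural number/eigenvalue invariant, but the argument is the same.
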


If $d > 1$, Corollary~\ref{E} fails for $\mathbb{Z}^d$-odometers, since there are examples of such odometers that are continuously orbit equivalent but not conjugate (see, e.g., \cite{GPS19}). Furthermore, orbit equivalence does not determine the stabilized automorphism group of a $\mathbb{Z}^d$-odometer, and having isomorphic stabilized automorphism groups does not imply continuous orbit equivalence, as we shall see in the next section.


\subsection{Examples in the context of $\mathbb{Z}^2$-odometers.}
For simplicity, in this section we write $\mathbb{Z}_{(p^n\mathbb{Z})}=\mathbb{Z}_p$,  for every $p\in \mathbb{N}$.

  All odometers appearing in this section are finitely generated profinite groups (see \cite{R10} for background on profinite groups). Hence, by \cite[Theorem~1.1]{NS07}, any group homomorphism between such groups is automatically continuous.

\subsubsection{Isomorphism of stabilized automorphism groups does not imply continuous orbit equivalence.} This example appears in \cite[Example 5.10.3]{GPS19}.  For every $n\in\mathbb{N}$, let
$$
\Gamma_n=\langle\{ \left(\begin{smallmatrix}
     2^n\\0
 \end{smallmatrix}\right), \left(\begin{smallmatrix}
     0\\15^n
 \end{smallmatrix}\right)  \}\rangle   \hspace{10mm} \Lambda_n=\langle\{ \left(\begin{smallmatrix}
     10^n\\0
 \end{smallmatrix}\right), \left(\begin{smallmatrix}
     0\\3^n
 \end{smallmatrix}\right)  \}\rangle.
$$
  Let $X=\mathbb{Z}^2_{(\Gamma_n)}$ and $Y=\mathbb{Z}^2_{(\Lambda_n)}$. The $\mathbb{Z}^2$-odometers $(X, \alpha_X,\mathbb{Z}^2)$ and $(Y, \alpha_Y,\mathbb{Z}^2)$ are orbit equivalent but not continuously orbit equivalent (\cite[Example 5.10]{GPS19}).   However, they are isomorphic as topological groups. In fact, both odometers are isomorphic   as topological groups  to $\mathbb{Z}_{2}\times \mathbb{Z}_{3}\times\mathbb{Z}_{5}$. Thus, from Theorem \ref{main-1} we deduce that $\aut^{\infty}(X, \alpha_X, \mathbb{Z}^2)\cong  \aut^{\infty}(Y, \alpha_Y, \mathbb{Z}^2)$. 

 \subsubsection{Orbit equivalence does not imply isomorphic stabilized automorphism groups.} For every $n\in\mathbb{N}$, let

 $$
\Gamma_n=\langle\{ \left(\begin{smallmatrix}
     6^n\\0
 \end{smallmatrix}\right), \left(\begin{smallmatrix}
     0\\10^n
 \end{smallmatrix}\right)  \}\rangle   \hspace{10mm} \Lambda_n=\langle\{ \left(\begin{smallmatrix}
     6^n\\0
 \end{smallmatrix}\right), \left(\begin{smallmatrix}
     0\\5^n
 \end{smallmatrix}\right)  \}\rangle.
$$
  Let $X=\mathbb{Z}^2_{(\Gamma_n)}$ and $Y=\mathbb{Z}^2_{(\Lambda_n)}$. Since $[\mathbb{Z}^2: \Gamma_n]=60=2^{2n}\cdot 3^n\cdot5^n$ and  $[\mathbb{Z}^2: \Lambda_n]=30^n=2^n\cdot 3^n\cdot 5^n$,  the reduced dimension groups of $(X,\alpha_X,\mathbb{Z}^2)$ and $(Y,\alpha_Y,\mathbb{Z}^2)$ are isomorphic as dimension groups. Thus, from \cite[Theorem 2.5]{GMPS10}  we deduce that $(X, \alpha_X,\mathbb{Z}^2)$ and $(Y, \alpha_Y, \mathbb{Z}^2)$ are orbit equivalent.

 On the other hand, $$X\cong (\mathbb{Z}_2\times \mathbb{Z}_2\times \mathbb{Z}_3\times \mathbb{Z}_5)\cong \mathbb{Z}_2\times \mathbb{Z}_{30}$$ and 
 $$Y\cong (\mathbb{Z}_2\times \mathbb{Z}_3\times \mathbb{Z}_5)\cong \mathbb{Z}_{30}.$$
 
   We may identify $X$ and $Y$ as topological groups with  $\mathbb{Z}_2\times \mathbb{Z}_{30}$ and $\mathbb{Z}_{30}$, respectively. If $\aut^{\infty}(X, \alpha_X, \mathbb{Z}^2)\cong \aut^{\infty}(Y, \alpha_Y, \mathbb{Z}^2)$, then Theorem \ref{main-1} implies there exist isomorphic clopen subgroups $U_1\subseteq X$ and $U_2\subseteq Y$ with the same finite index.  We can assume that $U_1$ is isomorphic to the product of cylinder sets of level $n$, for some $n\in\mathbb{N}$. Thus, $U_1$ is isomorphic to  $X$.  Since $U_2$ is isomorphic to $U_1$, we get that $Y$ contains a clopen subgroup isomorphic to $X$. However, the minimal number of generators of $X$ is $2$, while the minimal number of generators of $Y$ is $1$, which is a contradiction (see \cite[Proposition 4.3.6]{R10}). Thus   $\aut^{\infty}(X, \alpha_X, \mathbb{Z}^2)$ and $\aut^{\infty}(Y, \alpha_Y, \mathbb{Z}^2)$ are not isomorphic.
 
 


\subsubsection{Isomorphism of stabilized automorphism groups does not imply isomorphism as topological groups.}
The $\mathbb{Z}^2$-odometers of \cite[Example 3.7]{CM16} are continuously orbit equivalent, then they have isomorphic stabilized automorphism groups. However, one of the odometers is isomorphic as a topological group to $\mathbb{Z}_{(2^n\mathbb{Z})}\times \mathbb{Z}_{(2^n\mathbb{Z})}$, meanwhile the other one is isomorphic to $\mathbb{Z}_{(4^n\mathbb{Z})}$.

\begin{proposition}\label{Summary}
    In the context of $\mathbb{Z}^2$-odometers: 
\begin{enumerate}
\item Orbit equivalence does not imply isomorphism of stabilized automorphism groups.
\item Isomorphism of stabilized automorphism groups does not imply continuous orbit equivalence.
\item Isomorphism of stabilized automorphism groups does not imply Isomorphism as topological groups.

\end{enumerate}
\end{proposition}


\begin{thebibliography}{SSS}


\bibitem{Aus88}  Auslander, Joseph. {\it Minimal flows and their extensions} North-Holland Mathematics Studies,
153, North-Holland, Amsterdam, 1988.

\bibitem{Aus63} Auslander, Joseph.
{\it Endomorphisms of minimal sets.}
Duke Math. J. 30 (1963), 605--614.

\bibitem{CCG24} Cecchi-Bernales, P.; Cortez, M.I. ; G\'omez, J. {\it Invariant measures of Toeplitz subshifts on non-amenable groups.} Ergodic Theory Dynam. Systems 44 (2024), no. 11, 3186--3215.

\bibitem{CM16} Cortez, M.I.; Medynets, K.
{\it Orbit equivalence rigidity of equicontinuous systems.} 
J. Lond. Math. Soc. (2) 94 (2016), no. 2, 545–-556.

\bibitem{CP08} Cortez, M.I.; Petite, Samuel. {\it $G$-odometers and their almost one-to-one extensions}. 
J. Lond. Math. Soc. (2) 78 (2008), no. 1, 1--20.

\bibitem{DDMP16} Donoso, S.; Durand, F.; Maass, A.; Petite, S. {\it On automorphism groups of low complexity
subshifts.} Ergodic Theory and Dynam. Systems 36:64--95, 2016

\bibitem{Dow05} Downarowicz, T. {\it Survey of odometers and Toeplitz flows in Algebraic and topological dynamics.}
Contemp. Math. 385:7–-37, 2005

\bibitem{BJ24} Espinoza, B.;  Jones-Baro, J.N.  {\it  Eigenvalues and the stabilized automorphism group}. Journal d'Analyse Math\'ematique, (2025). https://doi.org/10.1007/s11854-025-0423-y

\bibitem{GMPS10} Giordano, T., Matui, H., Putnam, I.F. et al. {\it Orbit equivalence for Cantor minimal $\mathbb{Z}^d$-systems}. Invent. math. 179, 119--158 (2010).

\bibitem{GPS19}   
Giordano, T.;  Putnam, I.F.; Skau, C.F. {\it $Z^d$-odometers and cohomology.}
Groups Geom. Dyn. 13 (2019), no. 3, 909-–938.

\bibitem{HKS22} Hartman, Y.;  Kra, B.;  Schmieding, S. {\it The stabilized automorphism group of a subshift.}
Int. Math. Res. Not. IMRN 2022, no. 21, 17112-–17186.

\bibitem{JB24} Jones-Baro, J.N.
{\it Stabilized automorphism group of odometers and of Toeplitz subshifts.}
Ergodic Theory Dynam. Systems 44 (2024), no. 8, 2289--2307.

\bibitem{KS23} Kionke, S.;  Schesler, E. {\it Amenability and profinite completions of finitely generated groups.}
Groups Geom. Dyn. 17 (2023), no. 4, 1235–-1258.

\bibitem{M11} Medynets, K.  {\it Reconstruction of orbits of Cantor systems from full groups.} Bulletin of the London Mathematical Society, 43 (2011), 1104--1110.

\bibitem{NS07} Nikolov, N.; Segal, D.
{\it On finitely generated profinite groups. I. Strong completeness and uniform bounds.}  
Ann. of Math. (2) 165 (2007), no. 1, 171–238.


\bibitem{R10} Ribes, L.; Zalesskii, P. {\it Profinite groups.}
Second edition
Ergeb. Math. Grenzgeb. (3), 40[Results in Mathematics and Related Areas. 3rd Series. A Series of Modern Surveys in Mathematics]
Springer-Verlag, Berlin, 2010.  

\bibitem{Sal23} Salo, V.
{\it Gate lattices and the stabilized automorphism group.}
J. Mod. Dyn. 19 (2023), 717-–749.

\bibitem{Sch22} Schmieding, S.
{\it Local $\mathcal{P}$ entropy and stabilized automorphism groups of subshifts.}
Invent. Math. 227 (2022), no. 3, 963--995.







 \end{thebibliography}
\end{document}